\title[Combinatorics of the asymmetric exclusion process]{Tableaux combinatorics for the asymmetric exclusion process and Askey-Wilson polynomials}
\author{Sylvie Corteel and Lauren K. Williams}
\thanks{Both authors were partially supported by 
  the grants ANR blanc Gamma and ANR-08-JCJC-0011, and the second author
  was partially supported by the NSF grant DMS-0854432 and an Alfred Sloan Fellowship.}
\address{Laboratoire d'Informatique Algorithmique: Fondements et Applications,
Centre National de la Recherche Scientifique et Universit\'e Paris Diderot,
Paris 7, Case 7014, 75205 Paris Cedex 13
France} 
\email{corteel@liafa.jussieu.fr}
\address{Department of Mathematics, University of California, Berkeley,
Evans Hall Room 913, Berkeley, CA 94720}
\email{williams@math.berkeley.edu}
\subjclass[2000]{Primary 05E10; Secondary 82B23, 60C05}
\keywords{}
\def\vblack(#1, #2)#3{\cnode*[linecolor=black](#1, #2){3}{#3}}
\def\vwhite(#1,#2)#3{\cnode[linecolor=black,fillcolor=white,fillstyle=solid](#1,
#2){3}{#3}}
\def\tbox(#1,#2)#3{
\x=#1 \y=#2
\multiply\x by 12
\multiply\y by 12
\z=\x \t=\y
\advance\z by 12
\advance\t by 12
\psline(\x,\y)(\x,\t)(\z,\t)(\z,\y)(\x,\y)
\advance\x by 6
\advance\y by 6
\rput(\x,\y){{\bf #3}}}
\newtheorem{theorem}{Theorem}[section]
\newtheorem{proposition}[theorem]{Proposition}
\newtheorem{lemma}[theorem]{Lemma}
\newtheorem{observation}[theorem]{Observation}
\newtheorem{example}[theorem]{Example}
\newtheorem{remark}[theorem]{Remark}
\newtheorem{problem}[theorem]{Problem}
\newtheorem{definition}[theorem]{Definition}
\newcommand{\T}{\mathcal{T}}
\newcommand{\ttt}{\tau}
\newcommand{\Z}{\mathbb Z}
\newcommand{\indicator}{\mathbbm{1}}
\DeclareMathOperator{\wt}{wt}
\newcommand{\thmrefer}[1]{\renewcommand\thetheorem
  {\protect\ref{#1}}\addtocounter{theorem}{-1}}
\begin{document}

\keywords{permutation tableaux, asymmetric exclusion process, 
Matrix Ansatz, staircase tableaux, Askey-Wilson polynomial}

%%%%%%%%%%%%%%%%%%%%%%%%%%%%%%%%%%%%%%%%%%%%%%%%%%%%%%%%%%%%%%%%%%%%%%%

\begin{abstract}

Introduced in the late 1960's \cite{bio, Spitzer}, 
the asymmetric exclusion process (ASEP) is an important model
from statistical mechanics which describes a system of interacting 
particles hopping left and right on a one-dimensional lattice of $n$
sites.  It has been cited as a model for traffic flow and 
protein synthesis.  In the most general form of the ASEP with open boundaries,
particles may enter and exit at the left with probabilities 
$\alpha$ and $\gamma$, and they may exit and enter at the right
with probabilities $\beta$ and $\delta$.  In the bulk, 
the probability of hopping left is $q$ times the probability of hopping
right.
The first main result of this paper is a combinatorial formula
for the stationary distribution of the ASEP with all parameters
general, in terms of 
a new class of tableaux which we call {\it staircase tableaux}.
This generalizes 
our previous work \cite{CW1, CW2} for the ASEP with 
parameters $\gamma=\delta=0$.
Using our first result and also results of Uchiyama-Sasamoto-Wadati 
\cite{USW}, 
we derive our second main result: a combinatorial 
formula for the moments of Askey-Wilson polynomials.
Since the early 1980's there has been a great deal of work
giving combinatorial formulas for moments of various other
classical orthogonal polynomials (e.g. Hermite, Charlier, Laguerre, Meixner).  However,
this is the first such formula for the 
Askey-Wilson polynomials, which are at the top of the hierarchy of
classical orthogonal polynomials.
\end{abstract}

\maketitle

\setcounter{tocdepth}{1}
\tableofcontents

%%%%%%%%%%%%%%%%%%%%%%%%%%%%%%%%%%%%%%%%%%%%%%%%%%
\section{Introduction}

The asymmetric exclusion process (ASEP)
is an important model from statistical mechanics which
was introduced independently in the context of biology
\cite{bio} and in mathematics \cite{Spitzer} around 1970.
Since then there has been a huge amount of activity
on the ASEP and its variants for a number of reasons:
although the definition of the model is quite simple,
the ASEP is surprisingly rich.  For example, it exhibits
boundary-induced phase transitions, spontaneous symmetry
breaking, and phase separation.  Furthermore,
the ASEP is regarded as a primitive
model for 
translation in protein synthesis \cite{bio},
traffic flow \cite{traffic},
and formation of shocks \cite{shock};
it also appears in a kind of sequence alignment problem in
computational biology \cite{comput}.
Important mathematical techniques used to understand
this model include the Matrix Ansatz of Derrida, Evans, Hakim, and Pasquier
\cite{Derrida1}, and also the Bethe Ansatz
\cite{DE,GS, S}.

This paper concerns the ASEP on a one-dimensional
lattice of $n$ sites with open boundaries.
Particles may enter from the left at rate $\alpha dt$ and
from the right at rate $\delta dt$; they may 
exit the system to the right
at rate $\beta dt$
and to the left at rate $\gamma dt$.
The probability of hopping left and right is $qdt$ and $udt$, respectively.\footnote{Actually there is no loss of generality in setting $u=1$, so
we will often do so.}

Dating back to at least 1982, it was realized
that there were connections between this
model and combinatorics --
for example, Young diagrams appeared in \cite{SZ}, and 
Catalan numbers arose in 
the analyses of the stationary distribution of the 
ASEP in \cite{Derrida0, Derrida1}. 
More recently, beginning with papers of Brak and Essam \cite{BE},
and Duchi and Schaeffer \cite{jumping}, and followed by 
a number of works including
\cite{Angel,  BCEPR, Corteel, CW1, CW2, Viennot}, there has been a push to 
understand the roots of the combinatorial features of the ASEP.
Ideally, the goal is to find a combinatorial description of the
stationary distribution: that is, to express each component
of the stationary distribution
as a generating function for a set of combinatorial objects.
Up to now, the best result in this direction was provided by 
\cite{CW1, CW2}, which addressed this question 
when $\gamma=\delta=0$ 
 in terms of permutation tableaux,
combinatorial objects introduced in the context of total positivity on the 
Grassmannian \cite{Postnikov}.

There is a second reason why combinatorialists became intrigued by 
the ASEP.  Papers of Sasamoto \cite{Sasamoto} and subsequently
Uchiyama, Sasamoto, and Wadati \cite{USW} linked the ASEP with open
boundaries to orthogonal polynomials, in particular,
to the Askey-Wilson polynomials.
The Askey-Wilson polynomials are orthogonal polynomials
that sit at the top of the hierarchy of (basic) hypergeometric orthogonal
polynomials \cite{AW,GR,Koekoek}, in the sense that all other polynomials
in this hierarchy are limiting cases or specializations of the Askey-Wilson polynomials.
It is known that orthogonal polynomials have many combinatorial features:
indeed, starting around the early 1980's, mathematicians
including 
Flajolet \cite{Flajolet}, Viennot \cite{Viennot-book}, and
Foata \cite{Foata},
initiated a combinatorial
approach to orthogonal polynomials.  Since then, combinatorial formulas
have been given for the moments of (the weight functions
of) many of the polynomials in the Askey scheme, 
including
$q$-Hermite, Chebyshev, $q$-Laguerre, Charlier, Meixner, and Al-Salam-Chihara
polynomials, see e.g. \cite{IS, ISV, KSZ1,  MSW, SS}.
However, no such formula was known for the moments of the
Askey-Wilson polynomials.  Therefore when the paper \cite{USW} provided 
a close link between the moments of the Askey-Wilson polynomials
and the stationary distribution of the ASEP, this gave the idea 
that a complete combinatorial understanding of the ASEP  might
also give rise to the combinatorics of the Askey-Wilson moments.

In this paper we give a complete solution to both of the 
above problems.  Namely, we introduce some new combinatorial objects,
which we call
{\it staircase tableaux}, and we prove that 
generating functions for staircase tableaux describe
the stationary distribution
of the ASEP, with all parameters general.
We then use this result together with \cite{USW} to give
a combinatorial formula for
the moments of the Askey-Wilson polynomials.

The method of proof for our stationary distribution result
builds upon important work of 
Derrida, Evans, Hakim, and Pasquier \cite{Derrida1}, who
introduced a {\it Matrix Ansatz} as a tool for understanding
its stationary distribution.  Briefly, the Matrix Ansatz
says that if one can find matrices and vectors satisfying
certain relations (the {\it DEHP algebra}), then 
each component of the stationary distribution of the ASEP can be expressed
in terms of certain products of these matrices and vectors.
Knowing this Ansatz, the strategy for proving our stationary distribution
result which one would like to employ is the following:
find matrices and vectors satisfying the DEHP algebra,
and show that appropriate products enumerate staircase tableaux.

However, it has been known since 1996 \cite[Section IV]{ER} that when 
the parameters of 
the ASEP satisfy certain relations 
(for example $\alpha \beta = q^i \gamma \delta$), there is no 
representation of the DEHP algebra.  Therefore the above strategy 
cannot succeed when all parameters
of the ASEP are general.  What we do instead is to
introduce a slight generalization
of the Matrix Ansatz (see Theorem \ref{ansatz2}), which 
is more flexible, albeit harder to use: instead of checking three identities,
one must check three infinite families of identites.  
So our strategy is to find matrices and vectors 
such that appropriate products enumerate staircase tableaux,
and prove that they satisfy the relations of the Generalized Matrix Ansatz.
This second step is quite involved, as our ``matrices" and ``vectors"
are somewhat complicated (they have four and two indices each, see
Section \ref{def-matrices}), and there is no easy way to use induction
to prove the three infinite families of identities.

We believe that our new staircase tableaux deserve further study,
because of their combinatorial interest and their
potential connection to geometry.
For example, staircase tableaux of size $n$ have cardinality $4^n n!$,
and hence are in bijection with doubly-signed permutations.  In
\cite{CSW}, we prove this with an explicit bijection,
and also develop connections to other combinatorial objects, 
including matchings, permutations, and trees.
Furthermore, because of the connection to the ASEP, we know that our
staircase tableaux have some hidden symmetries which are not at all
apparent from their definition.
For instance, it is clear from the definition of the ASEP
that the model remains unchanged if we reflect it over the
$y$-axis, and exchange parameters $\alpha$ and $\delta$,
$\beta$ and $\gamma$, and $q$ and $u$.  However,
the corresponding bijection on the level of tableaux has so far eluded us.
Finally, staircase tableaux generalize permutation tableaux,
which index certain cells in the non-negative part of the
Grassmannian \cite{Postnikov}; it would be interesting to
better understand the relationship between
the tableaux, the ASEP, and the geometry,
and potentially generalize it to staircase tableaux.

It is worth mentioning that in recent years there has been an explosion
of activity \cite{BFP, BC, FS, BF, QV, BS, TW, TW2, TW3}
surrounding another version of the ASEP, in which particles hop
not on a finite lattice but on $\Z$.
Much of this interest was inspired by 
Johansson's work
\cite{Johansson}, which reinterprets the TASEP (totally asymmetric
exclusion process) as a
randomly growing Young diagram.  Johansson then used
the well-understood combinatorics of Young diagrams and semi-standard
tableaux to approach the problem of current fluctuations
in the TASEP.  In light of this, one may hope that a better understanding of
the combinatorics of staircase tableaux could
lead to even more results on the ASEP with open boundaries.

The structure of this paper is as follows.
In Section \ref{setup} we define the ASEP (with open boundaries), and in
Sections \ref{Results} and \ref{AW Section} we state our
main results on the ASEP and on Askey-Wilson polynomials.
In Section \ref{newansatz} we prove a generalized Matrix Ansatz,
and in Sections \ref{proof1} and \ref{AWproof} we prove
our results on the ASEP and Askey-Wilson polynomials.
Section \ref{conc} gives  open problems, and the Appendix describes 
how staircase tableaux generalize permutation tableaux and 
alternative tableaux \cite{Viennot}.

\textsc{Acknowledgments:} We would like to thank
Ira Gessel, Richard Stanley, and Jan de Gier
for interesting remarks.  We are also grateful to the referees for 
their thoughtful comments, which helped us to greatly improve the exposition.
Finally, we are grateful to Svante Janson, who 
found a gap in the proof of Proposition 6.11 in the published version
of this article; the problem was that to deduce (17) from (16), 
the old proof used identity $2E_{j,m+1}^n$ rather than identity 
$2E_{j,m}^n$,
as originally stated.    This version of the paper corrects the error, 
by giving a direct proof of Theorem 6.9 (2) (using the number of the 
published version).  We are also grateful to Pawel Hitczenko for
useful comments.

\section{The asymmetric exclusion process (ASEP)}\label{setup}

The ASEP is often defined using a
continuous time parameter \cite{Derrida1}.  
However, one can also define it 
as a discrete-time Markov chain, as we will do 
below.  The continuous and discrete-time definitions are equivalent
in the sense that their stationary distributions are the same
\cite{jumping}.

\begin{definition}
Let $\alpha$, $\beta$, $\gamma$, $\delta$,  $q$, and $u$ be constants such that 
$0 \leq \alpha \leq 1$, $0 \leq \beta \leq 1$, 
$0 \leq \gamma \leq 1$, $0 \leq \delta \leq 1$, 
$0 \leq q \leq 1$,
and $0 \leq u \leq 1$.
Let $B_n$ be the set of all $2^n$ words in the
language $\{\circ, \bullet\}^*$.
The ASEP is the Markov chain on $B_n$ with
transition probabilities:
\begin{itemize}
\item  If $X = A\bullet \circ B$ and
$Y = A \circ \bullet B$ then
$P_{X,Y} = \frac{u}{n+1}$ (particle hops right) and
$P_{Y,X} = \frac{q}{n+1}$ (particle hops left).
\item  If $X = \circ B$ and $Y = \bullet B$
then $P_{X,Y} = \frac{\alpha}{n+1}$ (particle enters from the left).
\item  If $X = B \bullet$ and $Y = B \circ$
then $P_{X,Y} = \frac{\beta}{n+1}$ (particle exits to the right).
\item  If $X = \bullet B$ and $Y = \circ B$
then $P_{X,Y} = \frac{\gamma}{n+1}$ (particle exits to the left).
\item  If $X = B \circ$ and $Y = B \bullet$
then $P_{X,Y} = \frac{\delta}{n+1}$ (particle enters from  the right).
\item  Otherwise $P_{X,Y} = 0$ for $Y \neq X$
and $P_{X,X} = 1 - \sum_{X \neq Y} P_{X,Y}$.
\end{itemize}
\end{definition}

Note that we will sometimes denote a state of the ASEP as a
word in $\{0,1\}^n$ and sometimes as a word in $\{\circ,\bullet\}^n$.
In these notations, the symbols $1$ and $\bullet$ denote a particle,
while $0$ and $\circ$ denote the absence of
a particle, which one can also think of as a white particle.

See Figure \ref{states} for an
illustration of the four states, with transition probabilities,
for the case $n=2$.  The probabilities on the loops
are determined by the fact that the sum of the probabilities
on all outgoing arrows from a given state must be $1$.

\begin{figure}[h]
\centering
\includegraphics[height=1.5in]{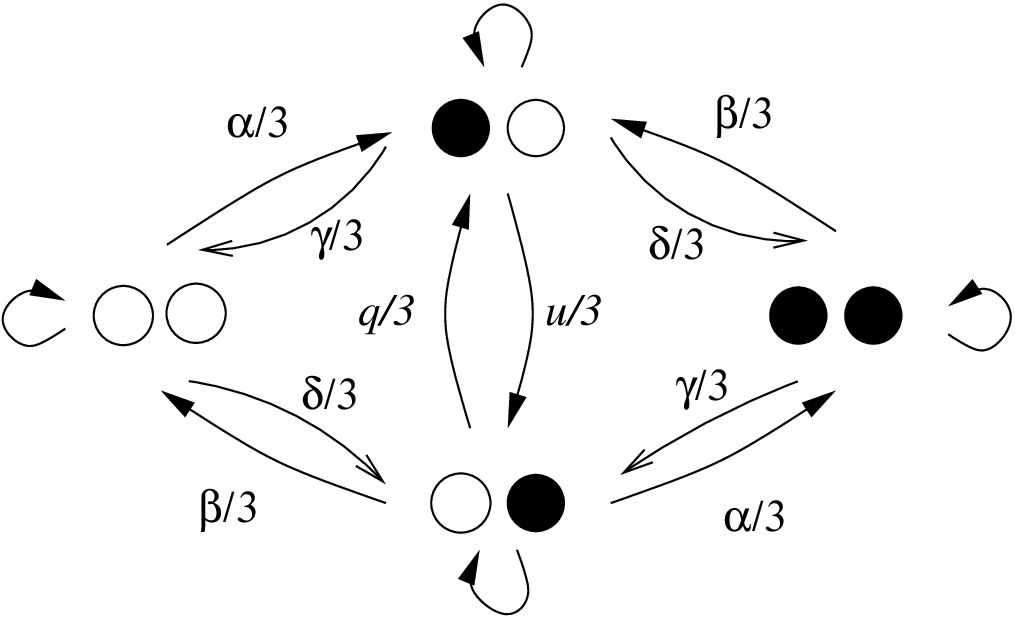}
\caption{The state diagram of the ASEP for $n=2$}
\label{states}
\end{figure}

In the long time limit, the system reaches a steady state where all 
the probabilities $P_n(\ttt_1, \ttt_2, \dots , \ttt_n)$ of finding
the system in configurations $(\ttt_1, \ttt_2, \dots , \ttt_n)$ are
stationary.  More specifically, the {\it stationary distribution}
is the unique (up to scaling) eigenvector of the transition
matrix of the Markov chain with eigenvalue $1$.

The ASEP clearly has multiple symmetries,
including the following.
\begin{itemize}
\item The ``left-right" symmetry: if we reflect the ASEP over the 
  $y$-axis,  
  we get back the same model,
  except that the parameters 
  $\alpha$ and $\delta$,
  $\gamma$ and $\beta$, and $u$ and $q$ are switched.
\item The ``arrow-reversal" symmetry: if we exchange black 
  and white particles, 
  we get back the same model, except that 
  the parameters $\alpha$ and $\gamma$,
  $\beta$ and $\delta$, and $u$ and $q$ are switched.
\item The ``particle-hole" symmetry: if we compose the above two
  symmetries, i.e. reflect the ASEP over the $y$-axis and 
  exchange black and white particles, we get back the same model,
  except that  $\alpha$ and $\beta$, and $\gamma$
  and $\delta$ are switched.
\end{itemize}

These symmetries imply results about the stationary distribution.
\begin{observation}\label{symmetries}
The steady state probabilities satisfy the following identities: 
\begin{itemize}
\item $P_n(\tau_1,\dots,\tau_n) = P_n(\tau_n,\dots,\tau_1) 
          \vert_{\alpha \leftrightarrow \delta, \beta \leftrightarrow \gamma, 
                  u \leftrightarrow q}$,
\item $P_n(\tau_1,\dots,\tau_n) = P_n(1-\tau_1,\dots,1-\tau_n) 
          \vert_{\alpha \leftrightarrow \gamma, \beta \leftrightarrow \delta, 
                  u \leftrightarrow q}$,
\item $P_n(\tau_1,\dots,\tau_n) = P_n(1-\tau_n,\dots,1-\tau_1) 
          \vert_{\alpha \leftrightarrow \beta, \gamma \leftrightarrow \delta}$.
\end{itemize}
\end{observation}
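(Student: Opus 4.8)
The plan is to derive each of the three identities from the corresponding symmetry of the Markov chain, using the uniqueness of the stationary distribution stated in Section \ref{setup}. The underlying principle is uniform: each symmetry is realized by an involution $\sigma$ of the state space $B_n$ together with an involution on the parameter sextuple $(\alpha,\beta,\gamma,\delta,q,u)$ (call it $\sigma\cdot(\alpha,\beta,\gamma,\delta,q,u)$), and the fact I would establish first is that the transition probabilities intertwine:
$$P_{X,Y}(\alpha,\beta,\gamma,\delta,q,u) = P_{\sigma(X),\sigma(Y)}\bigl(\sigma\cdot(\alpha,\beta,\gamma,\delta,q,u)\bigr).$$
In other words, the transition matrix at the swapped parameters is conjugate, via the permutation matrix of $\sigma$, to the transition matrix at the original parameters. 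Granting this, let $\pi'$ denote the stationary distribution for the swapped parameters. Since $\sigma$ is an involution, applying the intertwining relation at the swapped parameters gives $P_{X,Y}(\sigma\cdot\vec{p}) = P_{\sigma(X),\sigma(Y)}(\vec{p})$; substituting this into the stationarity equation $\sum_X \pi'(X)\,P_{X,Y}(\sigma\cdot\vec{p}) = \pi'(Y)$ and reindexing by $Z=\sigma(X)$, $W=\sigma(Y)$ shows that $Z\mapsto\pi'(\sigma(Z))$ is stationary for the original parameters $\vec{p}$. By uniqueness of the stationary vector, and since $\sigma$ is a bijection of the finite state space, the two agree, yielding $P_n(X)\vert_{\vec{p}} = P_n(\sigma(X))\vert_{\sigma\cdot\vec{p}}$, which is exactly the claimed identity once $\sigma$ is specified.

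Next I would spell out the three involutions. For the left--right symmetry, $\sigma$ is word reversal, $\sigma(\tau_1,\dots,\tau_n)=(\tau_n,\dots,\tau_1)$, with swap $\alpha\leftrightarrow\delta$, $\beta\leftrightarrow\gamma$, $u\leftrightarrow q$; this produces the first identity. For the arrow-reversal symmetry, $\sigma$ is site-wise complementation $\sigma(\tau_1,\dots,\tau_n)=(1-\tau_1,\dots,1-\tau_n)$ (exchanging $\bullet$ and $\circ$), with swap $\alpha\leftrightarrow\gamma$, $\beta\leftrightarrow\delta$, $u\leftrightarrow q$; this produces the second. The third identity comes from their composition, $\sigma(\tau_1,\dots,\tau_n)=(1-\tau_n,\dots,1-\tau_1)$, with swap $\alpha\leftrightarrow\beta$, $\gamma\leftrightarrow\delta$; here $u$ and $q$ are interchanged twice and so return to themselves, which is why they do not appear in the third restriction.

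The real content lies in checking the intertwining relation, which I would verify by inspecting the five families of off-diagonal transitions in the definition of the ASEP. For reversal, a right-hop $A\bullet\circ B\to A\circ\bullet B$ of rate $u$ becomes the left-hop $\sigma(B)\circ\bullet\sigma(A)\to\sigma(B)\bullet\circ\sigma(A)$, whose rate under the swapped parameters is $q'=u$; an entry-from-the-left $\circ B\to\bullet B$ of rate $\alpha$ becomes an entry-from-the-right of swapped rate $\delta'=\alpha$; and the remaining cases follow identically. For complementation, particles and holes trade roles, so that for instance an entry-from-the-left of rate $\alpha$ becomes an exit-to-the-left of swapped rate $\gamma'=\alpha$. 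Because the off-diagonal entries match termwise and $\sigma$ is a bijection, the self-loop probabilities $P_{X,X}=1-\sum_{Y\neq X}P_{X,Y}$ and the common normalizing factor $\tfrac{1}{n+1}$ match automatically, so the intertwining holds on all of $B_n\times B_n$.

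There is no deep obstacle here: the result is a clean consequence of uniqueness of the stationary distribution. The only point requiring genuine care is the bookkeeping, namely getting the direction of each parameter swap correct for each symmetry and confirming that self-loops and the factor $\tfrac{1}{n+1}$ are preserved. Accordingly, I would carry out the verification of the intertwining relation in full for word reversal, and then remark that site-wise complementation and the composition are handled in exactly the same way.
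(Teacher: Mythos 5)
Your proposal is correct and follows exactly the route the paper intends: the paper states Observation \ref{symmetries} as an immediate consequence of the ``left-right,'' ``arrow-reversal,'' and ``particle-hole'' symmetries of the model, which is precisely the intertwining-plus-uniqueness argument you spell out. Your write-up simply makes explicit the bookkeeping (the conjugation of the transition matrix by the state-space involution, the matching of self-loops, and the appeal to uniqueness of the stationary distribution) that the paper leaves implicit.
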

Above, the notation $\vert_{\alpha\leftrightarrow \delta}$ indicates
that the parameters $\alpha$ and $\delta$ are exchanged.
These symmetries are related to the symmetries of the Askey-Wilson
polynomials (see for example Remark \ref{AWSymmetry}), 
though neither is a direct consequence of the other.

\section{Staircase tableaux and the stationary distribution of the ASEP}\label{Results}

The main combinatorial objects of this paper are some new tableaux which we call 
{\it staircase tableaux}.  These tableaux generalize
permutation tableaux (equivalently, alternative tableaux).

\begin{definition}
A \emph{staircase tableau} of size $n$ is a Young diagram of ``staircase"
shape $(n, n-1, \dots, 2, 1)$ such that boxes are either empty or 
labeled with $\alpha, \beta, \gamma$, or $\delta$, subject to the following conditions:
\begin{itemize}
\item no box along the diagonal is empty;
\item all boxes in the same row and to the left of a $\beta$ or a $\delta$ are empty;
\item all boxes in the same column and above an $\alpha$ or a $\gamma$ are empty.
\end{itemize}
The \emph{type} of a staircase tableau is a word in $\{\bullet, \circ\}^n$ 
obtained by reading the diagonal boxes from 
northeast to southwest and writing a $\bullet$ for each $\alpha$ or $\delta$,
and a $\circ$ for each $\beta$ or $\gamma$.
\end{definition}

\begin{remark}\label{rem:type}
For convenience, we sometimes refer to the type of a staircase
tableau as a word in $\{D,E\}^n$ rather than $\{\bullet, \circ\}^n$, 
by identifying
$\circ$ by $E$ and $\bullet$ by $D$.
\end{remark}

See the left of Figure \ref{staircase} for an example of a staircase tableau.  

\begin{figure}[h]
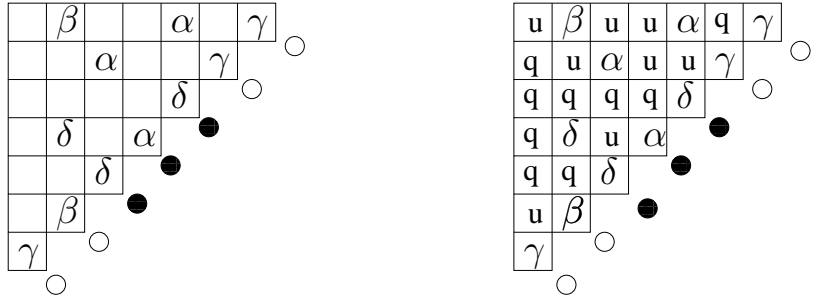

\input{Staircase.pstex_t} \hspace{6em} \input{Staircase2.pstex_t}
\caption{A staircase tableau
of size $7$ and type $\circ \circ \bullet \bullet \bullet \circ \circ$}
\label{staircase}
\end{figure}

\begin{definition}\label{weight}
The \emph{weight} $\wt(\T)$ of a staircase tableau $\T$ is a monomial in 
$\alpha, \beta, \gamma, \delta, q$, and $u$, which we obtain as follows.
Every blank box of $\T$ is assigned a $q$ or $u$, based on the label of the closest
labeled box to its right in the same row and the label of the closest labeled box
below it in the same column, such that:
\begin{itemize}
\item every blank box which sees a $\beta$ to its right gets assigned a $u$;
\item every blank box which sees a $\delta$ to its right gets assigned a $q$;
\item every blank box which sees an $\alpha$ or $\gamma$ to its right,
  and an $\alpha$ or $\delta$ below it, gets assigned a $u$; 
\item every blank box which sees an $\alpha$ or $\gamma$ to its right,
  and a $\beta$ or $\gamma$ below it, gets assigned a $q$.
\end{itemize}
After assigning a $q$ or $u$ to each blank box in this way, the 
\emph{weight} of $\T$ is then defined as the product of all labels in all boxes.
\end{definition}

The right of Figure \ref{staircase} 
shows that this staircase tableau 
has weight $\alpha^3 \beta^2 \gamma^3 \delta^3 q^9 u^8$.

\begin{remark}
The weight of a staircase tableau  
always has degree $n(n+1)/2$.  For convenience, we will sometimes set $u=1$, 
since this results in no loss of information.
\end{remark}

Our first main result (to be proved in Section \ref{proof1}) is the following.

\begin{theorem}\label{NewThm}
Consider any state $\tau$ of 
the ASEP with $n$ sites, where the parameters 
$\alpha, \beta, \gamma, \delta, q$ and $u$ are general.
Set $Z_n = \sum_{\T} \wt(\T)$,
where the sum is over all staircase tableaux of size $n$.
Then 
the steady state probability that the 
ASEP is at state $\tau$ is 
precisely 
\begin{equation*}
\frac{\sum_{\T} \wt(\T)}{Z_n},
\end{equation*}
where the sum is over all staircase tableaux $\T$ of type $\tau$.
In particular, $Z_n$ is the partition function for the ASEP.
\end{theorem}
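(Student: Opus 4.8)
The plan is to deduce Theorem \ref{NewThm} from the Generalized Matrix Ansatz of Theorem \ref{ansatz2} by exhibiting matrices and boundary vectors whose products, read along the type word, are precisely the generating functions for staircase tableaux. Recall that the classical Matrix Ansatz of \cite{Derrida1} asserts that if one has operators $D, E$ and vectors $\langle W |, |V\rangle$ satisfying (taking $u=1$ as we may) $DE - qED = D+E$, $\langle W|(\alpha E - \gamma D) = \langle W|$, and $(\beta D - \delta E)|V\rangle = |V\rangle$, then the unnormalized steady-state weight of a state $\tau = \tau_1 \cdots \tau_n$ equals $\langle W | \prod_{i=1}^n (\tau_i D + (1-\tau_i)E)|V\rangle$. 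As noted in \cite{ER} and in the introduction, no such representation exists for fully general parameters, so instead I would invoke Theorem \ref{ansatz2}, which replaces these three identities by three infinite families of identities (one for each word length) that are weaker but still sufficient to guarantee the same product formula for the stationary distribution.

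The heart of the argument is the construction of the operators. I would define a matrix $D$ associated to a $\bullet$ in the type (coming from an $\alpha$ or $\delta$ on the diagonal) and a matrix $E$ associated to a $\circ$ (coming from a $\beta$ or $\gamma$), together with the vectors $\langle W |$ and $|V\rangle$, as in Section \ref{def-matrices}. The indices of these objects should be chosen to record the combinatorial data needed to reconstruct a staircase tableau one diagonal box at a time: as one multiplies the operators corresponding to $\tau_1, \dots, \tau_n$, each factor adds one diagonal cell together with the row extending to its left and the column extending above it, and the four-index structure tracks which columns are already closed off by an $\alpha$ or $\gamma$ and hence how the $q$'s and $u$'s are distributed among the blank boxes according to Definition \ref{weight}.

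With this setup, the first key step is the combinatorial identity $\langle W | \prod_{i=1}^n (\tau_i D + (1-\tau_i)E)|V\rangle = \sum_{\T} \wt(\T)$, where the sum is over staircase tableaux $\T$ of type $\tau$. I expect to prove this by peeling off the northeast-most diagonal box and inducting on $n$, showing that summing over the admissible labels and fillings of the newly added row and column exactly reproduces one step of the matrix product; the boundary vectors $\langle W|$ and $|V\rangle$ encode the base case together with the constraints along the first row and last column.

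The main obstacle, and the most technical part of the proof, is verifying that these operators satisfy the three infinite families of identities demanded by Theorem \ref{ansatz2}. Because the matrices carry four indices and there is no clean algebraic recursion of the form $DE - qED = D + E$ available here, one cannot simply check a bounded number of relations or induct in an obvious way. Instead I would expand each family identity into its matrix-entry form and verify it by a direct, if intricate, generating-function computation, organizing the sums over intermediate indices so that the resulting cancellation or telescoping matches the required right-hand side. Once these families are established, Theorem \ref{ansatz2} yields that the product formula computes the unnormalized stationary weights; dividing by $Z_n = \sum_{\T} \wt(\T)$ gives the normalized probabilities, and summing the identity over all types $\tau$ shows that $Z_n$ is the partition function, completing the proof.
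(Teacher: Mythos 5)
Your proposal follows essentially the same route as the paper: invoke the Generalized Matrix Ansatz (Theorem \ref{ansatz2}) with $\lambda_n=\alpha\beta-\gamma\delta q^{n-1}$, define the four-index transfer matrices $D,E$ and two-index vectors $W,V$ of Section \ref{def-matrices}, prove their combinatorial interpretation (Lemma \ref{comb-lemma}, Theorem \ref{comb-interpret}) by building a tableau one column/diagonal box at a time, and then verify the three infinite families of identities in matrix-entry form. The only divergence is in how the hardest step is carried out: where you propose a ``direct'' generating-function verification, the paper strengthens relations (I) and (II) to identities at fixed indices $(j,\ell)$ (Proposition \ref{suffices}) and closes them via a delicate mutual induction on word length and the degrees in $\beta,\delta$ (Proposition \ref{implications}), while relation (III) gets a purely combinatorial proof.
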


Figure \ref{tableaux} illustrates Theorem \ref{NewThm}
for the state $\bullet \bullet$
of the ASEP.  All staircase tableaux $\T$
of type $\bullet \bullet$ are shown.
It follows that the steady state probability of  $\bullet \bullet$ is 
\begin{equation*}
\frac{\alpha^2 u + \delta^2 q +\alpha \delta q  +\alpha \delta u +
\alpha^2 \delta+ \alpha \beta \delta +\alpha \gamma \delta+\alpha \delta^2}
{Z_2}.
\end{equation*}

\begin{figure}[h]
\input{Example.pstex_t}
\caption{The tableaux of type $\bullet \bullet$}
\label{tableaux}
\end{figure}

We can also obtain combinatorial formulas for various physical quantities.
Theorem \ref{Physical} below will be proved in Section \ref{Applications}.

\begin{theorem}\label{Physical}
Consider the ASEP with $n$ sites.  Then we have
the following:

\begin{itemize}
\item The current in the steady state  is 
$\frac{Z_{n-1}(\alpha \beta - \gamma \delta q^{n-1})}{Z_n},$ where $Z_n$ is the generating function
for staircase tableaux of size $n$.

\item The average particle number $\langle \tau_i \rangle_n$ 
at site $i$ is given by 
$Z_n^{-1}$ times the generating function for all staircase
tableaux of size $n$ which have an $\alpha$ or $\delta$
at the $i$th position along the diagonal.

\item Similarly, 
the $m$-point function $\langle \tau_{i_1} \dots \tau_{i_m} \rangle_n$
is given by $Z_n^{-1}$ times the generating function for all staircase
tableaux of size $n$ which have an $\alpha$ or $\delta$
in positions $i_1,i_2,\dots,$ and $i_m$ along the diagonal.
\end{itemize}
\end{theorem}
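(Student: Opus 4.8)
The second and third statements follow immediately from Theorem \ref{NewThm} and the definition of the type of a staircase tableau, so I would dispose of them first. By definition the average occupation at site $i$ is $\langle \tau_i \rangle_n = \sum_{\tau} P_n(\tau)\,\tau_i$, where $\tau_i \in \{0,1\}$ records whether site $i$ carries a particle. Applying Theorem \ref{NewThm}, this equals $Z_n^{-1}\sum_{\tau:\,\tau_i=1}\sum_{\T \text{ of type }\tau}\wt(\T)$. Since a state $\tau$ has $\tau_i=1$ precisely when the $i$th diagonal box (read from northeast to southwest) carries an $\alpha$ or a $\delta$, the inner double sum is exactly the generating function for staircase tableaux of size $n$ with an $\alpha$ or $\delta$ in the $i$th diagonal position, which is the assertion. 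The $m$-point function is handled identically, since $\tau_{i_1}\cdots\tau_{i_m}=1$ exactly when each of the positions $i_1,\dots,i_m$ is occupied, i.e.\ carries an $\alpha$ or $\delta$.

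For the current I would begin from the fact that the steady-state current $J$ is conserved: a standard consequence of stationarity is that the bulk currents $J_i = u\langle \tau_i(1-\tau_{i+1})\rangle - q\langle(1-\tau_i)\tau_{i+1}\rangle$ across each bond $(i,i+1)$, together with the boundary currents $J_0 = \alpha\langle 1-\tau_1\rangle - \gamma\langle\tau_1\rangle$ and $J_n = \beta\langle\tau_n\rangle - \delta\langle 1-\tau_n\rangle$, all coincide with one value $J$. Via Theorem \ref{NewThm} each of these expectations is $Z_n^{-1}$ times a generating function for staircase tableaux with prescribed diagonal entries, so $J\,Z_n$ is a polynomial in the parameters, and the goal becomes to identify it with $Z_{n-1}(\alpha\beta - \gamma\delta q^{n-1})$ (at $u=1$).

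To evaluate $J\,Z_n$ I would work at a bulk bond and use the matrix-product representation underlying the proof of Theorem \ref{NewThm}: writing $\Psi_\tau = \sum_{\T\text{ of type }\tau}\wt(\T) = \langle W|\,X_1\cdots X_n\,|V\rangle$ with $X_j\in\{D,E\}$ and $Z_n=\langle W|(D+E)^n|V\rangle$, the quantity $J\,Z_n$ becomes $\langle W|(D+E)^{i-1}(uDE-qED)(D+E)^{n-i-1}|V\rangle$, which must be reduced to $Z_{n-1}$ times a boundary factor. A direct hand computation at $n=2$ gives $J\,Z_2 = (\alpha+\beta+\gamma+\delta)(\alpha\beta u - \gamma\delta q) = Z_1(\alpha\beta u-\gamma\delta q)$, matching the claim and confirming that the bulk cancellations isolate a single $\alpha\beta$ and a single $\gamma\delta$ term; this is consistent with the general $u$-form $\alpha\beta u^{n-1}-\gamma\delta q^{n-1}$, which specializes to the stated factor at $u=1$.

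The hard part will be carrying out this reduction for general $n$. Because the parameters are fully general, $D$ and $E$ do \emph{not} satisfy the ordinary DEHP algebra—there is no such representation, by \cite{ER}—so there is no single bulk relation $uDE-qED=D+E$ to invoke; instead one must feed in the three infinite families of identities from the generalized Matrix Ansatz (Theorem \ref{ansatz2}), and it is exactly the size-dependence of those identities that should produce the power $q^{n-1}$ rather than $q$. An appealing alternative that sidesteps the matrix manipulations is to prove the identity $u\,\Psi^{(i)}_{\bullet\circ} - q\,\Psi^{(i)}_{\circ\bullet} = Z_{n-1}(\alpha\beta u^{n-1}-\gamma\delta q^{n-1})$ combinatorially, where $\Psi^{(i)}_{\bullet\circ}$ (resp.\ $\Psi^{(i)}_{\circ\bullet}$) is the generating function for tableaux whose $i$th and $(i+1)$st diagonal entries read $\bullet\circ$ (resp.\ $\circ\bullet$), via a weight-preserving bijection between the surviving tableaux on the two sides; choosing the bond adjacent to the boundary, so that the relevant row or column is as short as possible, should make the surviving terms and the origin of the factor $q^{n-1}$ most transparent. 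I expect this evaluation—rather than the conservation law or the particle-number reductions—to be the main obstacle.
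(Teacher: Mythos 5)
You handle the second and third bullets correctly, and in substance exactly as the paper does: they are immediate from Theorem \ref{NewThm}, since $\langle \tau_{i_1}\cdots\tau_{i_m}\rangle_n$ is the sum of $P_n(\tau)$ over states with $\tau_{i_1}=\cdots=\tau_{i_m}=1$, i.e.\ over types having an $\alpha$ or $\delta$ in those diagonal positions. The gap is in the first bullet: you correctly reduce the claim to evaluating $WC^{i-1}(DE-qED)C^{n-i-1}V$ (at $u=1$, with $C=D+E$), verify $n=2$, and then leave the general reduction open, calling it ``the main obstacle'' and offering two unexecuted routes. As written, the principal assertion of the theorem is therefore not proved.

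What you missed is that the first route you name closes the gap in one line, because the relations of Theorem \ref{ansatz2} are quantified over \emph{all} words $X,Y$ in $D,E$, and the combinatorial matrices of Section \ref{proof1} are proved to satisfy them (Proposition \ref{suffices} together with Theorem \ref{identities}). Relation (I) reads $WX(DE-qED)YV=\lambda_{|X|+|Y|+2}\,WX(D+E)YV$; letting $X$ range over the words of $C^{i-1}$ and $Y$ over those of $C^{n-i-1}$ and summing, linearity gives
\[
WC^{i-1}(DE-qED)C^{n-i-1}V \;=\; \lambda_n\, WC^{i-1}(D+E)C^{n-i-1}V \;=\; \lambda_n\, WC^{n-1}V \;=\;(\alpha\beta-\gamma\delta q^{n-1})\,Z_{n-1},
\]
which is exactly the claim, with independence of $i$ coming for free (so the conservation-law setup is unnecessary). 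Only family (I) is needed, applied once; the ``hard part'' you anticipate was already discharged in Section \ref{proof1}, and the size-dependence you predicted is precisely $\lambda_n=\alpha\beta-\gamma\delta q^{n-1}$. For comparison, the paper itself argues differently: it quotes the standard consequence of the ordinary Matrix Ansatz \cite{Derrida1} that a DEHP representation $\tilde D,\tilde E,\tilde W,\tilde V$ (such as that of \cite{USW}, which exists for generic parameters) yields current $\tilde Z_{n-1}/\tilde Z_n$, and then transfers to staircase generating functions via Lemma \ref{ansatz-relation}, whose equation (\ref{2partition}) produces the factor $\alpha\beta-\gamma\delta q^{n-1}$ as the ratio $\prod_{r=0}^{n-1}(\alpha\beta-\gamma\delta q^{r})\big/\prod_{r=0}^{n-2}(\alpha\beta-\gamma\delta q^{r})$; the $m$-point formulas likewise follow because the proportionality constants of Lemma \ref{ansatz-relation} cancel in numerator and denominator. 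Both arguments rest on the same mechanism ($\lambda_n$ replacing the constant $1$ of the DEHP algebra), but the completion sketched above is more self-contained, since it never requires a DEHP representation to exist.
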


\begin{remark}
In \cite{CW1} and \cite{CW2}, which concerned the ASEP
with parameters $\gamma=\delta=0$, we gave combinatorial
expressions for the stationary distribution in terms of 
{\it permutation tableaux}: more specifically, 
the steady state probability of a state $\tau$ was given by 
enumerating permutation tableaux lying in a Young diagram of shape
$\lambda(\tau)$,
according to 
the number of $1$'s in the top row and the number of unrestricted 
rows. 

Subsequently \cite{Burstein} and 
\cite{CN} observed that a permutation
tableau is determined by complementary statistics,
namely the positions of the topmost $1$'s not in the first row,
and rightmost restricted
zero's.  Independently, Viennot \cite{Viennot}
defined some new \emph{alternative tableaux}, in order to give
a simpler formula for the stationary distribution of the ASEP
when $\gamma=\delta=0$ than the one in \cite{CW1, CW2} ; it turns
out that alternative tableaux can be obtained from permutation tableaux,
by making boxes red, blue, or empty,
based on whether the box of the associated permutation tableau
contains a topmost $1$, rightmost restricted zero, or neither.
Our staircase tableaux generalize both kinds of tableaux, but now 
have non-empty boxes labeled by  
$\alpha, \beta, \gamma, \delta$.
% (or $q$ or $t$).
Additionally, instead of working with Young diagrams of various shapes,
we now always work with a staircase shape, whose \emph{type}
encodes the same information (namely, the corresponding state of the ASEP)
that the shape used to.
Working
with this larger shape seems to be
the only natural way to assign the appropriate powers of 
$q$ and $u$ to the tableau.
\end{remark}

\begin{remark}
As an alternative to Definition \ref{weight}, suppose we
define the \emph{dual weight} $\wt'(\T)$ of a staircase
tableau by taking the product of all labels in all boxes,
after having filled the blank boxes of $\T$ according to the 
following rule:
\begin{itemize}
\item every blank box which sees an $\alpha$ below it gets assigned a $u$;
\item every blank box which sees a $\gamma$ below it gets assigned a $q$;
\item every blank box which sees an $\alpha$ or $\delta$ to its right,
  and a $\beta$ or $\delta$ below it, gets assigned a $q$; 
\item every blank box which sees a $\beta$ or $\gamma$ to its right,
  and a $\beta$ or $\delta$ below it, gets assigned a $u$.
\end{itemize}
Then Theorem \ref{NewThm} continues to hold, with $\wt$ replaced
by $\wt'$.  This follows from the left-right
symmetry of the ASEP.
More specifically, note that if $\T$ is a staircase tableau, then 
the tableau
$\T'$ obtained by  transposing $\T$ then switching
$\alpha$ and $\delta$, and $\beta$ and $\gamma$ is still a staircase tableau.  
Our observation now follows
from Theorem \ref{NewThm}, the fact that $\wt'(\T') = \wt(\T)$,
and Observation \ref{symmetries}.
(Alternatively, we could have proved the observation using 
a method analogous to the proof of Theorem 
\ref{NewThm}.)
\end{remark}

\section{Askey-Wilson polynomials and a formula for their moments}\label{AW Section}

The Askey-Wilson polynomials are  orthogonal polynomials
with five free parameters ($a, b, c, d, q$).
They reside at the top of the hierarchy of the one-variable
orthogonal polynomial family in the Askey scheme \cite{AW,GR,Koekoek}.
In this section we define the Askey-Wilson polynomials, following
the exposition of \cite{AW} and \cite{USW}, then  
state a combinatorial formula for their moments.

The $q$-shifted factorial is defined by
\begin{eqnarray*}
(a_1,a_2,\cdots,a_s;q)_n=\prod_{r=1}^s \prod_{k=0}^{n-1} (1-a_rq^k),
\end{eqnarray*}
and the basic hypergeometric function is given by 
\begin{eqnarray*}
{}_r\phi_s\left[ {{a_1,\cdots ,a_r}\atop{b_1,\cdots ,b_s}};q,z\right]
=\sum_{k=0}^\infty \frac{(a_1,\cdots,a_r;q)_k}{(b_1,\cdots,b_s,q;q)_k}
((-1)^k q^{k(k-1)/2})^{1+s-r} z^k.
\end{eqnarray*}

The Askey-Wilson polynomial $P_n(x)=P_n(x;a,b,c,d\vert q)$
is explicitly defined by
\begin{eqnarray*}
P_n(x)=
a^{-n}(ab,ac,ad;q)_n\
{}_4\phi_3\left[ {{q^{-n},q^{n-1}abcd,ae^{i\theta},ae^{-i\theta}}
        \atop{ab,ac,ad}};q,q \right] ,
\label{eqn:defAW}
\end{eqnarray*}
with $x=\cos\theta$ for $n\in \Z_+:=\{0,1,2,\cdots\}$.
It satisfies the three-term recurrence
\begin{eqnarray*}
A_nP_{n+1}(x)+B_nP_n(x)+C_nP_{n-1}(x)=2xP_n(x),
\label{eqn:recAW}
\end{eqnarray*}
with $P_0(x)=1$ and $P_{-1}(x)=0$,
where
\begin{align*}
A_n&=
\frac{1-q^{n-1}abcd}{(1-q^{2n-1}abcd)(1-q^{2n}abcd)},
\\
B_n&=
\frac{q^{n-1}}{(1-q^{2n-2}abcd)(1-q^{2n}abcd)}
[(1+q^{2n-1}abcd)(qs+abcds')-q^{n-1}(1+q)abcd(s+qs')],
\\
C_n&=
\frac{(1-q^n)(1-q^{n-1}ab)(1-q^{n-1}ac)(1-q^{n-1}ad)(1-q^{n-1}bc)
(1-q^{n-1}bd)(1-q^{n-1}cd)}
{(1-q^{2n-2}abcd)(1-q^{2n-1}abcd)},
\end{align*}
\begin{eqnarray*}
\text{ and }~~s=a+b+c+d, \qquad s'=a^{-1}+b^{-1}+c^{-1}+d^{-1}.
\end{eqnarray*}

\begin{remark}\label{AWSymmetry}
It is obvious from the three-term recurrence that the polynomials
$P_n(x)$ are symmetric in $a, b, c$ and $d$.
\end{remark}

For $|a|, |b|, |c|, |d| < 1$, using 
$z=e^{i\theta}$,
the orthogonality is expressed by 
\begin{eqnarray*}
\oint_C \frac{dz}{4\pi iz} w\left(\frac{z+z^{-1}}{2}\right)
P_m\left(\frac{z+z^{-1}}{2}\right)P_n\left(\frac{z+z^{-1}}{2}\right)
=h_n\delta_{mn},
\label{eqn:orthoointAW}
\end{eqnarray*}
where the integral contour $C$ is a closed path which
encloses the poles at $z=aq^k$, $bq^k$, $cq^k$, $dq^k$ $(k\in \Z_+)$
and excludes the poles at $z=(aq^k)^{-1}$, $(bq^k)^{-1}$, $(cq^k)^{-1}$,
$(dq^k)^{-1}$ $(k\in \Z_+)$, and where
\begin{eqnarray*}
&&w(\cos\theta)=
\frac{(e^{2i\theta},e^{-2i\theta};q)_\infty}
{(ae^{i\theta},ae^{-i\theta},be^{i\theta},be^{-i\theta},
ce^{i\theta},ce^{-i\theta},de^{i\theta},de^{-i\theta};q)_\infty}, \\
&&\frac{h_n}{h_0}=
\frac{(1-q^{n-1}abcd)(q,ab,ac,ad,bc,bd,cd;q)_n}
{(1-q^{2n-1}abcd)(abcd;q)_n} ,\\
&&h_0=
\frac{(abcd;q)_\infty}{(q,ab,ac,ad,bc,bd,cd;q)_\infty} .
\end{eqnarray*}
(In the other parameter region, the orthogonality is continued analytically.)

The moments are defined by 
\begin{eqnarray*}
\mu_k = \oint_C \frac{dz}{4\pi iz} w\left(\frac{z+z^{-1}}{2}\right)
\left(\frac{z+z^{-1}}{2}\right)^k.
\label{eqn:moment}
\end{eqnarray*}

The second main result of this paper is a combinatorial formula for 
the moments of the Askey-Wilson polynomials.
In Theorem \ref{moments} below, we use the substitution 
\begin{align*}
\alpha&=\frac{1-q}{1+ac+a+c},~~~~~ 
&\beta=\frac{1-q}{1+bd+b+d},\\
\gamma&=\frac{-(1-q)ac}{1+ac+a+c},~~~~~
&\delta=\frac{-(1-q)bd}{1+bd+b+d},
\end{align*}
which can be inverted via 
\begin{eqnarray*}
a&=&\frac{1-q-\alpha+\gamma+\sqrt{(1-q-\alpha+\gamma)^2+4\alpha\gamma}}{2\alpha}\\
c&=&\frac{1-q-\alpha+\gamma-\sqrt{(1-q-\alpha+\gamma)^2+4\alpha\gamma}}{2\alpha}\\
b&=&\frac{1-q-\beta+\delta+\sqrt{(1-q-\beta+\delta)^2+4\beta\delta}}{2\beta}\\
d&=&\frac{1-q-\beta+\delta-\sqrt{(1-q-\beta+\delta)^2+4\beta\delta}}{2\beta}.\\
\end{eqnarray*}

Recall that $Z_{\ell}=\sum_{\T} \wt(\T)$, where the sum is over all 
staircase tableaux of size $\ell$.
\begin{theorem}\label{moments}
The $k$th moment of the Askey-Wilson polynomials is given by 
\begin{equation*}
\mu_k = h_0 \sum_{\ell=0}^k (-1)^{k-\ell}{k \choose \ell} 
\left(\frac{1-q}{2}\right)^{\ell}
 \frac{{Z}_{\ell}}{\prod_{i=0}^{\ell-1} (\alpha \beta - \gamma \delta q^i)}.
\end{equation*}
\end{theorem}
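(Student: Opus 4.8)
\emph{Plan.} The idea is to combine Theorem \ref{NewThm} with the explicit Askey--Wilson representation of the DEHP algebra constructed by Uchiyama, Sasamoto and Wadati \cite{USW}; once the operators are identified, the stated formula falls out of a single binomial expansion. Throughout I set $u=1$ (permissible by the Remark following Definition \ref{weight}) and work under the substitution relating $(\alpha,\beta,\gamma,\delta)$ to $(a,b,c,d)$ displayed just before the theorem, which is exactly the change of variables used in \cite{USW}.

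First I would import the operator identification from \cite{USW}. Writing $C=D+E$ for the matrices $D,E$ and boundary vectors $\langle W|$, $|V\rangle$ of the Matrix Ansatz, \cite{USW} identify, under the above substitution, the operator $\frac{1-q}{2}C-1$ with multiplication by $x=\frac{z+z^{-1}}{2}$ in the Askey--Wilson basis, in such a way that the normalized functional $\frac{\langle W|\,f(C)\,|V\rangle}{\langle W|V\rangle}$ computes integration of $f(x)$ against the normalized Askey--Wilson weight. Since $P_0=1$ forces $\mu_0=h_0$, this gives
\begin{equation*}
\mu_k = h_0\,\frac{\langle W|\left(\frac{1-q}{2}C-1\right)^k|V\rangle}{\langle W|V\rangle}.
\end{equation*}
Expanding the $k$th power by the binomial theorem yields
\begin{equation*}
\mu_k = h_0\sum_{\ell=0}^{k}(-1)^{k-\ell}\binom{k}{\ell}\left(\frac{1-q}{2}\right)^{\ell}\frac{\langle W|C^{\ell}|V\rangle}{\langle W|V\rangle},
\end{equation*}
so the whole theorem reduces to the single identity $\frac{\langle W|C^{\ell}|V\rangle}{\langle W|V\rangle}=\frac{Z_{\ell}}{\prod_{i=0}^{\ell-1}(\alpha\beta-\gamma\delta q^i)}$.

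To prove this last identity I would argue via the stationary distribution. For each type $\tau$, the vector with entries $\sum_{\T:\,\mathrm{type}\,\tau}\wt(\T)$ (by Theorem \ref{NewThm}) and the vector with entries $\langle W|\prod_i X_{\tau_i}|V\rangle$, where $X_1=D$ and $X_0=E$ (by the Matrix Ansatz), are both unnormalized forms of the \emph{same} stationary vector; hence they are proportional by a single constant $\kappa_\ell$ depending only on $\ell$ and the parameters, and summing over $\tau$ gives $Z_\ell=\kappa_\ell\,\frac{\langle W|C^{\ell}|V\rangle}{\langle W|V\rangle}$. To pin down $\kappa_\ell$ I would compute the steady-state current two ways: the Matrix-Ansatz value $\frac{\langle W|C^{\ell-1}|V\rangle}{\langle W|C^{\ell}|V\rangle}$ against the combinatorial value $\frac{Z_{\ell-1}(\alpha\beta-\gamma\delta q^{\ell-1})}{Z_\ell}$ from Theorem \ref{Physical}. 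Equating them yields the recursion $\kappa_\ell=\kappa_{\ell-1}(\alpha\beta-\gamma\delta q^{\ell-1})$ with $\kappa_0=1$, so $\kappa_\ell=\prod_{i=0}^{\ell-1}(\alpha\beta-\gamma\delta q^i)$, and substituting back completes the computation.

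I expect the main obstacle to lie in this final step, for two intertwined reasons. First, one must read off from \cite{USW} the precise operator identification and vacuum normalization, getting every factor of $\frac{1-q}{2}$ and, crucially, the additive shift by $1$ exactly right, since any slip propagates through the entire binomial sum. Second, and more seriously, \cite[Section IV]{ER} shows that the DEHP algebra admits \emph{no} representation for general parameters---for instance when $\alpha\beta=q^i\gamma\delta$, which is precisely where the denominator $\prod_{i=0}^{\ell-1}(\alpha\beta-\gamma\delta q^i)$ vanishes---so the bracket manipulations above are literally valid only on the open parameter region where the Askey--Wilson representation of \cite{USW} exists. The final formula must therefore be promoted to ``all parameters general'' by noting that both sides are rational in $\alpha,\beta,\gamma,\delta,q$ (indeed $Z_\ell$ is polynomial) and agree on an open set, hence everywhere by analytic continuation.
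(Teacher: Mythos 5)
Your proposal is correct and, in its skeleton, coincides with the paper's proof: both arguments rest on the integral representation of the partition function $\tilde{Z}_\ell$ of the Uchiyama--Sasamoto--Wadati solution of the original Matrix Ansatz (your identification of $\frac{1-q}{2}C-1$ with multiplication by $x$ is exactly equivalent to the formula $\tilde{Z}_n=\oint_C\frac{dz}{4\pi iz}\,w\bigl(\tfrac{z+z^{-1}}{2}\bigr)\bigl[\tfrac{z+z^{-1}+2}{1-q}\bigr]^n$ that the paper quotes from \cite[Section 6.1]{USW}), followed by a binomial expansion, and then by the key identity $Z_\ell=h_0^{-1}\tilde{Z}_\ell\prod_{i=0}^{\ell-1}(\alpha\beta-\gamma\delta q^i)$, which is the paper's equation (\ref{2partition}). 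The one genuine divergence is how this identity is established. The paper proves it as Lemma \ref{ansatz-relation}: starting, as you do, from uniqueness of the stationary distribution (so $WXV=c_{|X|}\,\tilde{W}\tilde{X}\tilde{V}$ with a constant depending only on $|X|$), it then pins down the constant by induction on $|X|$, playing relation (III) of Theorem \ref{ansatz2} against the corresponding relation of Theorem \ref{ansatz}, which forces the ratio of consecutive constants to be a factor $\alpha\beta-\gamma\delta q^{n-1}$. You instead pin down the constant by equating the Matrix-Ansatz expression for the steady-state current with the tableau expression of Theorem \ref{Physical}. This is arithmetically sound, but it carries a structural caveat: in the paper, Theorem \ref{Physical} is itself proved (in Section \ref{Applications}) by invoking Lemma \ref{ansatz-relation} and equation (\ref{2partition}) --- precisely the identity you are extracting from it. So your argument is non-circular only because the paper proves Lemma \ref{ansatz-relation} independently elsewhere; as a self-contained substitute it would beg the question. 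The repair is cheap and worth recording: the current formula follows with no reference to the USW representation by summing relation (I) of Theorem \ref{ansatz2} over all words $X$ of length $i-1$ and $Y$ of length $n-i-1$, which gives $J\,Z_n=(\alpha\beta-\gamma\delta q^{n-1})Z_{n-1}$ directly for the staircase matrices. Finally, your closing observation --- that the bracket manipulations are valid only on the parameter region where the USW representation exists, and that the final identity of rational functions extends from there --- is a legitimate point of care that the paper leaves implicit.
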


\section{A more flexible Matrix Ansatz}\label{newansatz}

One of the most powerful techniques for studying
the ASEP is the so-called {\it Matrix Ansatz},
an Ansatz given by Derrida, Evans, Hakim, and Pasquier \cite{Derrida1} 
as a tool for solving for the 
steady state probabilities 
$P_n(\ttt_1, \dots , \ttt_n)$
of the ASEP.  In this section we 
will start by recalling their Matrix Ansatz,
and then give a new generalization of it which we require for our proof of 
Theorem \ref{NewThm}.

For convenience, in this section we set $u=1$.  Also, we
define unnormalized weights $f_n(\ttt_1, \dots , \ttt_n)$, which are
equal to the $P_n(\ttt_1, \dots , \ttt_n)$ up to a constant:
\begin{equation*}
P_n(\ttt_1, \dots , \ttt_n) = f_n(\ttt_1, \dots , \ttt_n)/Z_n,
\end{equation*}
where
$Z_n$ is the {\it partition function}
$\sum_{\tau} f_n(\ttt_1, \dots , \ttt_n)$.  The sum defining
$Z_n$ is
over all possible configurations $\ttt \in \{0,1\}^n$.
Derrida {\it et al} showed the following.

\begin{theorem} \cite{Derrida1} \label{ansatz}
Suppose that $D$ and $E$ are matrices,  $V$ is a column vector,
and $W$ is a row vector, with $WV=1$,
such that the following conditions hold:
\begin{equation*}
 DE - qED = D+E,~~~~~~~
 \beta DV-\delta EV =  V,~~~~~~~
 \alpha WE-\gamma WD =  W.
\end{equation*}
Then for any state $\tau=(\tau_1,\dots,\tau_n)$ of the ASEP,
\begin{equation*}
f_n(\ttt_1, \dots , \ttt_n) = W (\prod_{i=1}^n (\ttt_i D + (1-\ttt_i)E))V.
\end{equation*}
\end{theorem}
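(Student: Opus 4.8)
The plan is to verify that the function $\tilde f_n(\tau):=W\bigl(\prod_{i=1}^n C_{\tau_i}\bigr)V$, where $C_{\tau_i}=\tau_i D+(1-\tau_i)E$ (so that $C_1=D$ and $C_0=E$), is proportional to the stationary distribution of the ASEP. Since that distribution is the unique (up to scaling) $1$-eigenvector of the transition matrix, and since the discrete- and continuous-time chains share the same stationary distribution, it suffices to check the stationarity equation, which after cancelling the factor $\tfrac{1}{n+1}$ common to all transition probabilities reads, in terms of the bare rates $\alpha,\beta,\gamma,\delta,q,u$, that $g(\tau)=0$ for every state $\tau$, where $g(\tau):=\sum_{\sigma\neq\tau}\tilde f_n(\sigma)\,\mathrm{rate}(\sigma\to\tau)-\tilde f_n(\tau)\sum_{\sigma\neq\tau}\mathrm{rate}(\tau\to\sigma)$. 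Once this is shown, $\tilde f_n$ is a valid choice of unnormalized weights $f_n$ with $Z_n$ the normalizing sum; here $WV=1$ fixes the base case $\tilde f_0=1$.

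First I would localize. Every state $\sigma$ contributing to $g(\tau)$ is obtained from $\tau$ by a single allowed move, so $g(\tau)$ splits as the sum of a left-boundary term (a move at site $1$), a right-boundary term (a move at site $n$), and one bulk term $B_i(\tau)$ per bond $(i,i+1)$; in each summand the product $WC_{\tau_1}\cdots C_{\tau_n}V$ is altered only at the site(s) involved. The three defining relations are precisely what collapses these local terms onto $(n-1)$-fold products. Using $\alpha WE-\gamma WD=W$, the left-boundary term becomes $(2\tau_1-1)\,WC_{\tau_2}\cdots C_{\tau_n}V=(2\tau_1-1)\,\tilde f_{n-1}(\tau\setminus 1)$, where $\tau\setminus j$ denotes $\tau$ with its $j$th entry deleted; symmetrically $\beta DV-\delta EV=V$ gives a right-boundary term $(1-2\tau_n)\,\tilde f_{n-1}(\tau\setminus n)$. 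For a bulk bond only the patterns $ED$ and $DE$ (i.e.\ $\circ\bullet$ and $\bullet\circ$) contribute, and the bulk relation $DE-qED=D+E$ rewrites the inserted two-site factor as $\pm(D+E)$; splitting $D+E$ and reading $D$ and $E$ as occupied and empty yields the uniform formula $B_i(\tau)=(\tau_{i+1}-\tau_i)\bigl(\tilde f_{n-1}(\tau\setminus i)+\tilde f_{n-1}(\tau\setminus(i+1))\bigr)$.

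It then remains to prove that the sum of the reduced terms vanishes. Writing $a_j:=\tilde f_{n-1}(\tau\setminus j)$ and collecting, I obtain $g(\tau)=\sum_{j=1}^n c_j a_j$ with $c_1=\tau_1+\tau_2-1$, with $c_j=\tau_{j+1}-\tau_{j-1}$ for $2\le j\le n-1$, and with $c_n=1-\tau_{n-1}-\tau_n$. The crucial observation is that deleting either of two equal adjacent entries of $\tau$ produces the same word, so $a_j=a_{j+1}$ whenever $\tau_j=\tau_{j+1}$; hence $a_j$ is constant along each maximal block of equal values of $\tau$. Summing the coefficients $c_j$ over such a block telescopes, and because consecutive blocks carry opposite values in $\{0,1\}$ the resulting block-sum is $0$ in every case --- interior, leftmost, and rightmost --- whence $g(\tau)=0$.

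I expect this last cancellation to be the main obstacle. The reductions coming from the three relations are essentially mechanical, but the vanishing of $g(\tau)$ is not termwise: it relies simultaneously on the block-constancy of the reduced weights $a_j$ and on the alternation of block values, so it requires careful bookkeeping of the signs $(2\tau_1-1)$, $(\tau_{i+1}-\tau_i)$, $(1-2\tau_n)$ and of which site is deleted. Everything else --- the localization and the three one-line applications of the relations --- is routine.
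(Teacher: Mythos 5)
Your proposal is correct and follows essentially the same route as the paper: the paper proves its generalized Ansatz (Theorem \ref{ansatz2}, which specializes to this statement when $\lambda_n\equiv 1$) by exactly this scheme --- write the stationarity equation, use the three relations to reduce the left-boundary, bulk, and right-boundary terms to signed $(n-1)$-site weights $f_{n-1}$, and observe that everything cancels. Your explicit coefficient bookkeeping and block-telescoping argument simply fills in the final cancellation that the paper dismisses as ``easy to verify.''
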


Note that $\prod_{i=1}^n (\ttt_i D + (1-\ttt_i)E)$ is simply
a product of $n$ matrices $D$ or $E$ with matrix $D$ at position $i$
if site $i$ is occupied ($\ttt_i=1)$.
Also note that Theorem \ref{ansatz} implies that 
$Z_n=W (D+E)^n V$.

We now state and prove a more flexible version of 
Theorem \ref{ansatz}.  Our proof generalizes the argument
given in \cite{Derrida1}.

\begin{theorem} \label{ansatz2}
Let $\{\lambda_n\}_{n \geq 0}$ be a family of constants.  Let $W$
and $V$ be row and column vectors, with $WV=1$,
and let $D$ and $E$ be matrices
such that for any words $X$ and $Y$ in $D$ and $E$, we have:
\begin{enumerate}
\item[(I)] $WX(DE-qED)YV = \lambda_{|X|+|Y|+2} WX(D+E)YV$;
\item[(II)] $\beta WXDV-\delta WXEV=\lambda_{|X|+1} WXV$;
\item[(III)] $\alpha WEYV-\gamma WDYV=\lambda_{|Y|+1} WYV$.
\end{enumerate}
(Here $|X|$ is the length of $X$.)  Then for any state
$\tau=(\tau_1,\dots,\tau_n)$ of the ASEP, 
\begin{equation}\label{probs}
f_n(\tau)={W (\prod_{i=1}^n (\tau_i D+(1-\tau_i)E)) V}.
\end{equation}
\end{theorem}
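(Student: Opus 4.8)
The plan is to show that the vector whose $\tau$-component is $W\left(\prod_{i=1}^n(\tau_iD+(1-\tau_i)E)\right)V$ is stationary for the ASEP. Since the stationary distribution is unique up to scaling, and the discrete- and continuous-time chains share it, it then follows that these matrix products furnish a valid choice of unnormalized weights $f_n$, which is exactly \eqref{probs}. So it suffices to verify the master equation $\dot f_n(\tau)=0$ for every length-$n$ state $\tau$, where $\dot f_n(\tau)$ is the net rate of flow into $\tau$, namely $\sum_{\tau'\neq\tau}(\mathrm{rate}_{\tau'\to\tau})f_n(\tau')-\left(\sum_{\tau'\neq\tau}\mathrm{rate}_{\tau\to\tau'}\right)f_n(\tau)$. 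Throughout I set $u=1$ as in this section, and I write $X_i=D$ when $\tau_i=1$ and $X_i=E$ when $\tau_i=0$.

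First I would partition the transitions contributing to $\dot f_n(\tau)$ according to where they act: each transition changes $\tau$ at a single bulk bond $(i,i+1)$ or at one of the two boundary sites, so this is a genuine partition of all the terms. For a bulk bond carrying $X_iX_{i+1}=DE$, the right-hop out (rate $1$) and the left-hop in (rate $q$) combine to $-\,W X_1\cdots X_{i-1}(DE-qED)X_{i+2}\cdots X_nV$; for $ED$ they give the same expression with the opposite sign; and a bond carrying $DD$ or $EE$ contributes nothing. Likewise the two transitions at the left boundary combine into $\pm(\alpha WEYV-\gamma WDYV)$ with $Y=X_2\cdots X_n$, and those at the right boundary into $\pm(\beta WXDV-\delta WXEV)$ with $X=X_1\cdots X_{n-1}$. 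The point of this grouping is that each group is precisely the left-hand side of one of the identities (I), (II), (III).

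Next I would apply the three hypotheses. The crucial observation is that for a state of length $n$ the relevant index is always $n$: in (I) we have $|X|+|Y|+2=n$, in (II) we have $|X|+1=n$, and in (III) we have $|Y|+1=n$. Hence every application replaces its group by $\pm\lambda_n$ times a length-$(n-1)$ product, of the form $W X_1\cdots X_{i-1}(D+E)X_{i+2}\cdots X_nV$ for a bulk bond, $WYV$ for the left boundary, or $WXV$ for the right boundary, with the diagonal piece proportional to $f_n(\tau)$ absorbed inside each group. (This is exactly why the relations must hold for all words $X,Y$: only then is the subscript of $\lambda$ forced to equal the global length $n$, so that a single scalar factors out of every term.) After these substitutions $\dot f_n(\tau)$ becomes $\lambda_n$ times a signed sum of length-$(n-1)$ products, and this is precisely the signed sum that occurs in the proof of Theorem \ref{ansatz} (the special case $\lambda_n\equiv1$), where it telescopes to $0$. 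Since the telescoping is a purely combinatorial cancellation independent of the value of $\lambda_n$, it still yields $0$ here.

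The main obstacle is the telescoping step itself: one must check that the reduced products attached to consecutive bonds share a common factor $W\cdots(D+E)\cdots(D+E)\cdots V$ that cancels, and that the leftover terms at the two ends are cancelled exactly by the boundary contributions $\pm\lambda_n WYV$ and $\pm\lambda_n WXV$. This bookkeeping of signs and of which bonds are ``active'' is inherited from \cite{Derrida1}; the only genuinely new ingredient is the uniformity of the index $n$ established above, which allows the generalized relations to stand in for the original algebra relations. I would close by invoking uniqueness of the stationary distribution to conclude \eqref{probs}.
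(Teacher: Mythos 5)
Your proposal is correct and follows essentially the same route as the paper: both verify the stationarity (balance) equation for the matrix products by grouping the transitions into the left-boundary term, the bulk-bond terms, and the right-boundary term, recognize each group as the left-hand side of (III), (I), (II) respectively with the subscript forced to equal $n$, and then reduce each group via the hypotheses to $\pm\lambda_n$ times length-$(n-1)$ products, which cancel exactly as in the $\lambda_n\equiv 1$ argument of Derrida et al. The paper states this final cancellation just as briefly as you do (``all terms involving $f_{n-1}$ cancel out''), so your deferral of that bookkeeping matches the paper's level of detail.
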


\begin{proof}
We are in the steady state of the ASEP if the net rate of entering 
each state $(\tau_1,\dots,\tau_n)$ is 0, or in other words, 
the following expression equals $0$:
\begin{eqnarray}
&&(-1)^{\tau_1} (-\alpha f_n(0,\tau_2,\dots,\tau_n) +  
\gamma f_n(1,\tau_2,\dots,\tau_n)) \label{First}  \\
&& +\sum_{i=1}^{n-1} (-1)^{\tau_i} \chi(\tau_i \neq \tau_{i+1})
  (f_n(\tau_1,\dots,1,0,\dots,\tau_n)-qf_n(\tau_1,\dots,0,1,\dots,\tau_n)) \label{second}\\
&&+(-1)^{\tau_n} (\beta f_n(\tau_1,\dots,\tau_{n-1},1)-\delta f_n(\tau_1,\dots,
\tau_{n-1},0)). \label{third}
\end{eqnarray}
In (\ref{second}) above, the arguments $1,0$ and $0,1$ are in 
positions $i$ and $i+1$, and $\chi$ is the boolean function taking value $1$ or $0$
based on whether its argument is true or false.
So what we have to prove is that the quantities in the right-hand-side
of equation (\ref{probs}) satisfy this equation.

By the assumptions (I), (II), and (III) of the theorem, we have the following:
\begin{itemize}
\item the expression (\ref{First})  equals 
   $\pm \lambda_n f_{n-1}(\tau_2,\dots,\tau_n)$, based on whether
   $\tau_1$ is $1$ or $0$;
\item (\ref{second}) equals $0$ when $\tau_i=\tau_{i+1}$; otherwise, based
  on whether $\tau_i$ is $1$ or $0$,  
  it equals 
  $\mp \lambda_n \sum_{i=1}^n (f_{n-1}(\tau_1,\dots,\hat{\tau}_i,\dots,\tau_n)
  + f_{n-1}(\tau_1,\dots,\hat{\tau}_{i+1},\dots,\tau_n))$;
\item and  (\ref{third})  equals
 $ \mp \lambda_n f_{n-1}(\tau_1,\dots,\tau_{n-1})$, based on whether 
 $\tau_n$ is $1$ or $0$.
\end{itemize}
(Here $\hat{\tau_i}$ denotes the omission of the $i$th component.)
Then using these conditions, it is easy to verify that the sum of 
(\ref{First}), (\ref{second}), and (\ref{third}) is equal to $0$,
since all terms involving $f_{n-1}$ cancel out.

\end{proof}

\section{The proof of the stationary distribution}\label{proof1}

In this section we will prove Theorem \ref{NewThm}
by: defining vectors $W,V$ and matrices 
$D,E$; proving
that they have the requisite combinatorial
interpretation in terms of staircase tableaux;
and checking that they satisfy the relations of Theorem
\ref{ansatz2}, with
$\lambda_0=1$ and $\lambda_n=\alpha \beta-\gamma \delta q^{n-1}$ for $n\geq 1$.

This is analogous to the proof of \cite[Theorem 3.1]{CW1}, 
albeit much more difficult: in \cite{CW1}, it was obvious 
that our matrices and vectors satisfied the Matrix Ansatz
\cite[Lemma 2.5]{CW1}, and easy to show
that our combinatorial 
objects were described by the algebraic relations of the Ansatz,
see \cite[Figure 6]{CW1} and the 
surrounding discussion.

In contrast, in this more general situation, 
we can give a combinatorial proof of relation (III)
of our new Matrix Ansatz, but not
for (I) or (II).  Instead we give a rather difficult algebraic proof of 
(I) and (II).
First of all,
our new ``vectors" and ``matrices"
have two and four indices, respectively, which makes
working with them more complicated.  Second,
to use Theorem \ref{ansatz2}, instead of proving
that our vectors and matrices satisfy three identities
(as in Theorem \ref{ansatz}), we must prove
that they satisfy three {\it infinite families} of identities.
Moreover, there is no obvious way to use induction to prove 
these identities: one cannot take one of the identities and 
multiply on the left or right to obtain the next identity 
in the family.

\begin{remark}\label{u=1}
In this section we assume $u=1$.  Recall that this is no
loss of generality, as the weight of a staircase tableau
of size $n$ is always a monomial of degree $n(n+1)/2$.
%However, setting $u=1$ has the advantage of making our bookkeeping a little
%easier.
\end{remark}

\subsection{The definition of our matrices}\label{def-matrices}

\begin{definition}  In what follows, indices range over the non-negative
integers.  In particular, our matrices and vectors are not finite.
We define ``row" and ``column" vectors $W=(W_{ik})_{i,k}$ and 
$V=(V_{j\ell})_{j,\ell}$, and matrices
$D=(D_{i,j,k,\ell})_{i,j,k,\ell}$
and $E=(E_{i,j,k,\ell})_{i,j,k,\ell}$  
by the following:
\begin{eqnarray*}
%\begin{align*}
W_{ik}&=&\left\{
\begin{array}{ll}
1 &\textup{if $i=k=0$,}\\
0  &\textup{otherwise,}
\end{array}  \right.\\
V_{j\ell}&=&1 \text{ always.}
\end{eqnarray*}
%\end{equation*}
\begin{eqnarray*}
D_{i,j,k,\ell}&=&\left\{
\begin{array}{ll}
0 &\textup{if $j<i$ or $\ell>k+1$,}\\
\delta q^i &\textup{if $i=j-1$ and $k=\ell=0$,}\\
\alpha q^i  &\textup{if $i=j$, $k=0$ and $\ell=1$,}\\
\delta (D_{i,j-1,k-1,\ell}+E_{i,j-1,k-1,\ell})+D_{i,j,k-1,\ell-1}  
&\textup{otherwise.}
\end{array}  \right.\\
%\end{align*}
E_{i,j,k,\ell}&=&\left\{
\begin{array}{ll}
0 &\textup{if $j<i$ or $\ell>k+1$,}\\
\beta q^i &\textup{if $i=j$ and $k=\ell=0$,}\\
\gamma q^i  &\textup{if $i=j$, $k=0$ and $\ell=1$,}\\
\beta (D_{i,j,k-1,\ell}+E_{i,j,k-1,\ell})+qE_{i,j,k-1,\ell-1}\hspace{.2cm}~~~~~~~~~~  
&\textup{otherwise.}
\end{array}  \right.
\end{eqnarray*}

\end{definition} 

By convention, if any subscript $i, j, k$ or $\ell$ is negative, then
$D_{ijk\ell}=E_{ijk\ell} = 0$.

\begin{example}
\begin{align*}
E_{0,2,2,0} &= \beta (D_{0,2,1,0}+E_{0,2,1,0})+qE_{0,2,1,-1} \\ 
            &= \beta (D_{0,2,1,0}+E_{0,2,1,0})\\ 
            &= \beta[ \delta(D_{0,1,0,0}+E_{0,1,0,0})+D_{0,2,0,-1}+\beta(D_{0,2,0,0}+E_{0,2,0,0})+qE_{0,2,0,-1}]\\
            &= \beta \delta(D_{0,1,0,0}+E_{0,1,0,0})+\beta^2(D_{0,2,0,0}+E_{0,2,0,0})\\
            &= \beta \delta(\delta + 0) +\beta^2(0+0) = \beta \delta^2
\end{align*}
\end{example}

Here, we think of the two coordinates
$i$ and $k$ as specifying a ``row" of a matrix, and the two 
coordinates $j$ and $\ell$ as specifying a ``column" of a matrix. 
Therefore matrix multiplication is defined by 
$$(MN)_{i,j,k,\ell} = \sum_{a,b} M_{i,a,k,b} N_{a,j,b,\ell}.$$
Note that 
when $M$ and $N$ are products of $D$'s and $E$'s the sum on the right-hand-side
is  finite.  
Specifically, if $M$ is a word of length $r$ in $D$ and $E$,
then $M_{i,a,k,b}$ is $0$ unless $a+b \leq i + k+r$.  This can
be shown by induction from the definition of $D$ and $E$, or 
by using the combinatorial interpretation 
given in Lemma \ref{comb-lemma}.

\subsection{The combinatorial interpretation of our matrices in terms
of tableaux}

We say that a row of a staircase tableau $\T$
is {\it indexed by $\beta$} if the leftmost box 
in that row which is not occupied by a $q$ or $u$
is a $\beta$.  Note that every box to the left of that $\beta$
must be a $u$.  Similarly we will talk about rows which 
are {\it indexed by $\delta$}; in this case, every box to the left of that 
$\delta$ must be a $q$.  We will also talk about rows
which are {\it indexed by $\alpha/\gamma$}, which is shorthand
for rows which are indexed by $\alpha$ {\it or} $\gamma$.

%In theorem \ref{comb-interpret} below, we will abuse notation slightly
%by referring to a column in a staircase tableau
%with an $\alpha$ or $\delta$ at the bottom
%as a column of \emph{type $D$} and a column with a $\beta$ or $\gamma$
%at the bottom as a column of \emph{type $E$}.  Similarly any sequence
%of columns in a staircase tableaux has a \emph{type} which is a word 
%in $D$ and $E$, corresponding 

\begin{theorem}\label{comb-interpret}
If $X$ is a word in $D$'s and $E$'s, then:
\begin{itemize} 
\item $X_{ijk\ell}$ is the generating function for 
all ways of adding $|X|$ new columns of type $X$ to a staircase
tableau with $i$ rows indexed by $\delta$ and $k$ rows indexed by $\alpha/\gamma$,
so as to obtain a new tableau with $j$ rows indexed by 
$\delta$ and $\ell$ rows indexed by $\alpha/\gamma$.
\item $(WX)_{j\ell}$ is the generating function for 
staircase tableaux of type $X$ which have $j$ rows
indexed by $\delta$ and $\ell$ rows indexed by $\alpha/\gamma$
(and hence $|X|-j-\ell$ rows indexed by $\beta$.)
\item $WXV$ is the generating function for all staircase
tableaux of type $X$.
\end{itemize}
\end{theorem}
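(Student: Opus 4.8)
The plan is to prove the three bullet points of Theorem~\ref{comb-interpret} together by induction on $|X|$, since the matrix entries $D_{i,j,k,\ell}$ and $E_{i,j,k,\ell}$ are defined recursively by adding one column at a time. The key insight driving the whole argument is that the recursion for $D$ and $E$ should exactly mirror the combinatorial operation of prepending a new (leftmost) column to a staircase tableau of size $m$ to obtain one of size $m+1$. So the first thing I would do is set up carefully what ``adding a new column'' means: a staircase tableau of size $m+1$ is obtained from one of size $m$ by adjoining a new column of length $m+1$ on the left (equivalently, a new diagonal box together with the boxes above it). I must describe the allowed fillings of this new column in terms of the labels $\alpha,\beta,\gamma,\delta$ placed in it, the forced $q$'s and $u$'s in its blank boxes, and crucially how the new column changes the ``indexing'' statistics --- how many rows are indexed by $\delta$, by $\alpha/\gamma$, and by $\beta$.

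The heart of the proof is a bijective/weight-preserving analysis showing that the four cases in the definition of $D_{i,j,k,\ell}$ (and separately $E_{i,j,k,\ell}$) correspond precisely to the possible configurations of the new leftmost column. First I would interpret the base cases: $D_{i,j-1,k-1,\ell}$\ldots wait, rather, the \emph{boundary} cases $\delta q^i$ (when $i=j-1$, $k=\ell=0$) and $\alpha q^i$ (when $i=j$, $k=0$, $\ell=1$) should correspond to the new diagonal box being labeled $\delta$ or $\alpha$ in a column that produces no rows indexed by $\alpha/\gamma$ except as dictated, with the factor $q^i$ counting the forced $q$'s created in the new column above the existing $\delta$-indexed rows. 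I would match the recursive case $\delta(D_{i,j-1,k-1,\ell}+E_{i,j-1,k-1,\ell})+D_{i,j,k-1,\ell-1}$ term by term: the $\delta(\cdots)$ term corresponds to placing a $\delta$ in the top box of the new column, which forces everything below in that column and converts the statistics appropriately (decrementing $j$ and $k$ in the smaller tableau), while the $D_{i,j,k-1,\ell-1}$ term corresponds to the top box being blank (assigned a $u$ or $q$ according to Definition~\ref{weight}), decrementing $\ell$. The analogous matching must be carried out for $E$, where the factor of $q$ in $qE_{i,j,k-1,\ell-1}$ tracks a forced $q$ coming from a $\delta$ or $\gamma$ below. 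I expect this term-by-term matching --- verifying that the weight bookkeeping of the powers of $q$ and $u$ agrees exactly with the forced fillings prescribed by Definition~\ref{weight} --- to be the main obstacle, since the rules for assigning $q$ versus $u$ to blank boxes depend on what lies both to the right and below, and one must check that the recursion's index shifts encode exactly this local geometry.

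Once the first bullet (the interpretation of $X_{ijk\ell}$) is established by this induction, the second and third bullets follow quickly. For the second, I would compute $(WX)_{j\ell} = \sum_{i,k} W_{ik} X_{ijk\ell} = X_{0,j,0,\ell}$ since $W_{ik}$ is nonzero only at $i=k=0$; this specializes the first bullet to the empty starting tableau (zero rows of each type), so $(WX)_{j\ell}$ enumerates the staircase tableaux of type $X$ (i.e., built by adding $|X|$ columns from scratch) with exactly $j$ rows indexed by $\delta$ and $\ell$ indexed by $\alpha/\gamma$; the count $|X|-j-\ell$ of rows indexed by $\beta$ is then forced because every row is indexed by exactly one of $\beta,\delta,\alpha/\gamma$. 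For the third, since $V_{j\ell}=1$ always, I would write $WXV = \sum_{j,\ell}(WX)_{j\ell}$, which sums over all possible values of the two statistics and hence enumerates \emph{all} staircase tableaux of type $X$, giving exactly the claimed generating function. The only subtlety to address here is the finiteness of these sums, which is already guaranteed by the remark following the definition of matrix multiplication (that $M_{i,a,k,b}=0$ unless $a+b\le i+k+r$), so all the specializations above are well-defined finite sums.
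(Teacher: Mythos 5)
Your overall architecture is exactly the paper's: you reduce the first bullet to a single-column ``transfer matrix'' interpretation of the entries $D_{ijk\ell}$ and $E_{ijk\ell}$ (this is the paper's Lemma \ref{comb-lemma}) combined with the definition of matrix multiplication, and you then obtain the second and third bullets by specializing with $W_{ik}$ (nonzero only at $i=k=0$, i.e.\ starting from the empty tableau) and summing against $V_{j\ell}=1$; those reductions are correct as you state them. The genuine gap is in the step you yourself flag as ``the main obstacle'': the term-by-term matching of the recursion defining $D$ and $E$ with the operation of prepending a column. As sketched, that matching would fail. The recursion is not obtained by peeling off the \emph{top} box of the new column, and the factor $\delta$ in $\delta\left(D_{i,j-1,k-1,\ell}+E_{i,j-1,k-1,\ell}\right)$ is not ``a $\delta$ placed in the top box forcing everything below'': a $\delta$ forces the boxes to its \emph{left in its row} to be empty, not the boxes below it in its column, and your reading leaves unexplained why the sum $D+E$ (rather than $D$ alone) appears in that term.

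The correspondence that actually closes the induction peels off the \emph{lowest} box $B$ of the new column that sits in a row indexed by $\alpha/\gamma$; note first that boxes of the new column in rows indexed by $\beta$ or $\delta$ are forced blank with forced weights $u$ and $q$ respectively, which is the source of the $q^i$ in the boundary cases. If $B$ is filled with any letter $x$, then the diagonal (bottom) box of the column is forced to be $\delta$ rather than $\alpha$ --- because an $\alpha$ on the diagonal would force every box above it in that column to be empty --- and deleting this $\delta$ while letting $x$ play the role of the diagonal entry of a shorter column added to the tableau with $B$'s row removed yields exactly $\delta\,(D+E)_{i,j-1,k-1,\ell}$; the $D+E$ appears precisely because $x$ may be any of $\alpha,\beta,\gamma,\delta$. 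If instead $B$ is blank, it sees an $\alpha$ or $\gamma$ to its right and the diagonal letter below it, so by Definition \ref{weight} it gets weight $u=1$ in a $D$-column and $q$ in an $E$-column (not, as you write, a $q$ from ``a $\delta$ or $\gamma$ below'' --- a $\delta$ below gives $u$), producing the terms $D_{i,j,k-1,\ell-1}$ and $qE_{i,j,k-1,\ell-1}$. Without this correct identification of which box is peeled and which entry is forced, the weight bookkeeping and the index shifts $(j-1,k-1)$ versus $(k-1,\ell-1)$ cannot be justified, so the inductive step of your argument is missing its core.
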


The main step in proving Theorem \ref{comb-interpret}
is the
following lemma, which
says that the matrices $D$ and $E$
are ``transfer matrices" for building
staircase tableaux.  

\begin{lemma}\label{comb-lemma}
$D_{i,j,k,\ell}$ is the generating function for the weights
of all possible new columns with an $\alpha$ or $\delta$ in the bottom box
that we could add to the left of a staircase tableau with
$i$ rows indexed by $\delta$ and $k$ rows indexed by $\alpha$ or $\gamma$,
obtaining a new staircase tableau which has $j$ rows indexed by 
$\delta$ and $\ell$ rows indexed by $\alpha$ or $\gamma$.
Similarly for $E_{i,j,k,\ell}$, where the new column has a $\beta$
or $\gamma$ in the bottom box. 
\end{lemma}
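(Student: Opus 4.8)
The plan is to prove the lemma by strong induction, showing that the two combinatorial generating functions --- call them $\tilde D_{i,j,k,\ell}$ and $\tilde E_{i,j,k,\ell}$, defined as the sums of $\wt$ over all admissible new columns with the prescribed bottom label and the prescribed effect on the indexing statistics --- satisfy exactly the same recurrence and base cases as the matrix entries $D_{i,j,k,\ell}$ and $E_{i,j,k,\ell}$. Since a recurrence together with its initial data determines the entries uniquely, this gives $\tilde D = D$ and $\tilde E = E$. Throughout we use $u=1$ (Remark \ref{u=1}), so that the only nontrivial powers appearing are powers of $q$. First I would pin down the combinatorial model precisely: what it means to prepend a column to a staircase tableau, which boxes of the new column are forced empty by the two emptiness conditions in the definition of a staircase tableau, and how Definition \ref{weight} assigns a $q$ or a $u=1$ to each empty box of the new column in terms of the nearest labels to its right and below. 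I would also record how the indexing statistics transform: placing a $\delta$ (resp.\ $\beta$) as the new leftmost entry of a line changes its index to $\delta$ (resp.\ $\beta$), an empty box leaves the index unchanged, and the new bottom (diagonal) box creates one new indexed line whose type is dictated by whether we are building a $D$-column ($\alpha/\delta$) or an $E$-column ($\beta/\gamma$).

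Next I would handle the base cases directly: a column that is empty except for its bottom box contributes $\delta q^i$ or $\alpha q^i$ (resp.\ $\beta q^i$ or $\gamma q^i$), where the factor $q^i$ is forced by the weight rule applied to the empty boxes sitting over the $i$ lines already indexed by $\delta$. I would likewise check the two vanishing conditions $j<i$ and $\ell>k+1$, which express that prepending a single column can never decrease the number of $\delta$-indexed lines and can increase the number of $\alpha/\gamma$-indexed lines by at most one.

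For the inductive step I would decompose the set of admissible columns according to the content of one distinguished box --- the entry of the new column in the topmost row that still interacts with an $\alpha/\gamma$-indexed line --- and match each possibility to a single term of the recurrence. For $D$, an entry $\delta$ (weight $\delta$) converts an $\alpha/\gamma$-line into a $\delta$-line and leaves a shorter admissible column whose own bottom box may be of either type, producing $\delta(D_{i,j-1,k-1,\ell}+E_{i,j-1,k-1,\ell})$, whereas an empty box (weight $u=1$) leaves the line indexed by $\alpha/\gamma$ and produces $D_{i,j,k-1,\ell-1}$. The analogous analysis for $E$, where the relevant body entry is a $\beta$ and the empty box receives weight $q$, yields $\beta(D_{i,j,k-1,\ell}+E_{i,j,k-1,\ell})+qE_{i,j,k-1,\ell-1}$. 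In each case I must verify that removing the distinguished box and its line leaves precisely an admissible column enumerated by the indicated matrix entry, so that the induction closes.

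I expect the main obstacle to be the weight bookkeeping together with the coupling between $D$ and $E$. One must verify that the $q/u$ assignment dictated by Definition \ref{weight} --- which depends on the label below each empty box, and hence on the type of column being built --- reproduces exactly the coefficients $\delta,\beta,q,1$ and the index shifts of the recurrence, and one must explain why both $D$ and $E$ appear in the sub-problems even though the bottom label of the column is fixed. Making this matching airtight, rather than the routine induction that surrounds it, is the crux; once Lemma \ref{comb-lemma} is established in this form, the statements of Theorem \ref{comb-interpret} follow by iterating the transfer step and contracting with $W$ and $V$.
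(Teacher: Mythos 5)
Your overall strategy is exactly the paper's: define the combinatorial generating functions, check the vanishing conditions ($j<i$, $\ell>k+1$) and the base cases ($\delta q^i$, $\alpha q^i$, etc.), and show they satisfy the same recurrence as $D$ and $E$. Those parts of your proposal are fine. But the inductive step --- which you yourself flag as the crux and leave unverified --- is set up incorrectly. The distinguished box must be the box of the new column lying in the \emph{lowest} row indexed by $\alpha/\gamma$, not the topmost. Only for the lowest such box is everything between it and the bottom (diagonal) box of the column forced to be empty, and this is precisely what determines the weight of the distinguished box when it is left blank: the nearest label below it is then the diagonal label, so it gets $u=1$ for a $D$-column ($\alpha$ or $\delta$ below) and $q$ for an $E$-column ($\beta$ or $\gamma$ below) --- the exact point where the two recurrences differ. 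With the topmost box, the blank-box weight depends on the unknown filling below it, and the filled case leaves a sub-problem in which $\alpha$'s and $\gamma$'s are forbidden in the boxes below (since boxes above an $\alpha$ or $\gamma$ must be empty); that restricted sub-problem is not of the form $D+E$, so neither case factors and the induction does not close.

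Relatedly, you attribute the coefficient $\delta$ (resp.\ $\beta$) to filling the distinguished box with $\delta$ (resp.\ $\beta$), and accordingly allow only the cases ``distinguished box $=\delta$'' and ``empty.'' In the correct decomposition the distinguished box may be filled with \emph{any} of $\alpha,\beta,\gamma,\delta$; what is forced is the \emph{diagonal} box: if any box of a $D$-column above the diagonal is filled, the diagonal cannot be $\alpha$ and must be $\delta$ (for an $E$-column it must be $\beta$). That forced diagonal label is the source of the coefficients $\delta$ and $\beta$, and the sum $D+E$ appears because the filled distinguished box then serves as the bottom box of a shorter column with $k-1$ free rows, and its label may be of $D$-type or $E$-type. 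Under your reading, the ``shorter column's bottom box'' is the original diagonal box, which for a $D$-column is of $D$-type only, so your decomposition would produce $\delta\, D_{i,j-1,k-1,\ell}$ rather than $\delta(D_{i,j-1,k-1,\ell}+E_{i,j-1,k-1,\ell})$. In short, the two facts you would need to make the matching airtight --- the forced diagonal label, and the lowest-box property pinning down the blank weight --- are exactly what is missing, and with ``topmost'' in place of ``lowest'' they are false.
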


\begin{proof}
Let $D'_{ijk\ell}$ denote the generating function
for all possible new columns with an $\alpha$ or $\delta$ in the bottom box
that we could add to the left of a staircase tableau with
$i$ rows indexed by $\delta$ and $k$ rows indexed by $\alpha/\gamma$,
obtaining a new staircase tableau which has $j$ rows indexed by
$\delta$ and $\ell$ rows indexed by $\alpha$ or $\gamma$.
We will show that $D'_{ijk\ell} = D_{ijk\ell}$ by showing
that $D'$ satisfies the same recurrences.

Note that $D'_{ijk\ell}=0$ if $j<i$ because
adding a new column to a staircase tableau never 
decreases the number of rows indexed by $\delta$.
Also $D'_{ijk\ell}=0$ if $\ell > k+1$ because when
we add a new column we can never increase the number
of rows indexed by $\alpha/\gamma$ by more than $1$.

Now suppose that $k=0$.  If we are starting from
a tableau with $i$ rows indexed by $\delta$
and $0$ rows indexed by $\alpha/\gamma$, 
then the only way to add
a new column is to add a column with an $\alpha$ or $\delta$ 
at the bottom, with all boxes above empty.
If we add a $\delta$, then the resulting tableau
has $\ell=0$ rows indexed by $\alpha/\gamma$ and $j=i+1$
rows indexed by $\delta$.  The weight of the new column
will be $\delta q^i$.
On the other hand, if we add an $\alpha$ at the bottom,
then the resulting tableau has $\ell=1$ rows indexed by
$\alpha/\gamma$ and $j=i$ rows indexed by $\delta$.  The weight of 
the new column will be $\alpha q^i$.
From this discussion it follows that 
$D'_{ijk\ell}=\delta q^i$ when $j=i+1$ and $k=\ell=0$,
and $D'_{ijk\ell}=\alpha q^i$ when $j=i$, $k=0$, and $\ell=1$.

In all other situations, we can assume that $k \geq 1$.
Suppose that we are adding a new column $C$
with an $\alpha$ or $\delta$ at the bottom to the left of a staircase
tableau with $i$ rows indexed by $\delta$ and $k$
rows indexed by $\alpha/\gamma$, so as to create a new tableau $\T$.
Consider the lowest box $B$ of $C$ whose row in $\T$ is indexed by an
$\alpha$ or $\gamma$ (such a box exists since $k \geq 1$).
If we fill $B$ with an $\alpha, \beta, \gamma$ or $\delta$,
then the bottom box of $C$ must contain a $\delta$.
In this case, if we ignore that bottom $\delta$,
then our choices for $C$ are exactly 
the same as our choices would be for adding a 
new column to the left of a staircase tableau
with $i$ rows indexed by $\delta$ and $k-1$
rows indexed by $\alpha/\gamma$.
Therefore, filling $B$ with an $\alpha, \beta, \gamma$ or $\delta$
gives us a contribution of 
$d(D'+E')_{i,j-1,k-1,\ell}$ to our generating function.

On the other hand, if we leave $B$ empty,
then this box will get a weight $u=1$ (recall Remark \ref{u=1}).
Filling the rest of the column $C$ is like
adding a new column to a staircase tableau
with $i$ rows indexed by $\delta$ and $k-1$ rows indexed
by $\alpha/\gamma$.  Therefore leaving $B$ empty 
gives us a contribution of 
$D'_{i,j,k-1,\ell-1}$ to our generating function.

It follows that when $k \geq 1$,
$D'_{ijk\ell} = \delta (D'+E')_{i,j-1,k-1,\ell} + D'_{i,j,k-1,\ell-1}.$

Similarly, 
we define $E'_{ijk\ell}$ to be the generating function for all possible
new columns with a $\beta$ or $\gamma$ in the bottom box that we could
add to the left of a staircase tableau with $i$ rows indexed by $\delta$
and $k$ rows indexed by $\alpha/\gamma$, obtaining a new staircase tableau
which has $j$ rows indexed by $\delta$ and $\ell$ rows indexed by 
$\alpha/\gamma$.  
The proof that $E'_{ijk\ell}=E_{ijk\ell}$ is 
analogous to the proof we gave for $D'$.
\end{proof}

\begin{proof}[Proof of Theorem \ref{comb-interpret}]
The first item follows from Lemma \ref{comb-lemma} and the definition
of matrix multiplication.
Multiplying at the left by a $W$ has the effect that we start 
with the empty tableau and then add columns according to $X$:
so 
$W X_{j\ell}$ is the generating function
for staircase tableaux of type $X$ which have
$j$ rows indexed by $\delta$ and $\ell$ rows indexed by 
$\alpha/\gamma$.  Finally, multiplying $WX_{j\ell}$ on the right by $V$
has the effect of summing over all $\delta$ and $\ell$,
so $WXV$ is the generating function for all staircase
tableaux of type $X$.
\end{proof}

\subsection{The proof that our matrices satisfy the Matrix Ansatz}

We now prove that our matrices
satisfy  
Theorem \ref{ansatz2}, with $\lambda_n=\alpha \beta-\gamma \delta q^{n-1}$ for $n \geq 1$.
Relation (III) has a simple combinatorial
proof.  However, this proof does {\it not} work for relation (II),
and indeed it will require a lot more work to prove
 (I) and (II).

\begin{lemma}
Relation (III) of Theorem \ref{ansatz2} holds.
\end{lemma}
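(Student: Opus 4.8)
The plan is to prove relation (III) directly on the tableaux, using Theorem \ref{comb-interpret} to read $WEYV$, $WDYV$, and $WYV$ as the generating functions for staircase tableaux of types $EY$, $DY$, and $Y$. Writing $m=|Y|$, the tableaux of type $EY$ and $DY$ have size $m+1$, and their first letter records the label of the box read first along the diagonal, namely the single box in the top-right corner. Deleting the top row of such a tableau produces a size-$m$ staircase tableau whose type is exactly $Y$. I would therefore organize the argument according to the label $\alpha,\beta,\gamma,\delta$ of this corner box: a type-$EY$ tableau carries a $\beta$ or $\gamma$ there, and a type-$DY$ tableau carries an $\alpha$ or $\delta$ there.

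First I would dispose of the ``side'' corners $\alpha$ and $\gamma$. The corner box is the unique box of its column and the rightmost box of its row, so changing its label from $\gamma$ to $\alpha$ affects no emptiness constraint elsewhere, and affects no $q/u$ assignment either: the weight rules see an $\alpha$ and a $\gamma$ identically when they lie ``to the right'' of a blank box. Hence relabeling the corner is a weight-preserving bijection between type-$EY$ tableaux with a $\gamma$-corner and type-$DY$ tableaux with an $\alpha$-corner, once the corner label itself is stripped off. Consequently $\alpha$ times the $\gamma$-corner part of $WEYV$ equals $\gamma$ times the $\alpha$-corner part of $WDYV$, and these two contributions cancel in $\alpha WEYV-\gamma WDYV$.

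It then remains to evaluate the $\beta$- and $\delta$-corner tableaux, which is where the computation becomes easy. If the corner carries a $\beta$ (resp.\ a $\delta$), then every box to its left in the top row lies to the left of that $\beta$ (resp.\ $\delta$) and is forced empty; by the weight rules each such blank box sees the corner to its right and so is assigned $u=1$ (resp.\ $q$). Since the corner sits in a $1\times1$ column, the top row interacts with nothing below it, and the weight of the whole tableau factors as $\beta$ (resp.\ $\delta q^{m}$) times the weight of the size-$m$ type-$Y$ tableau left after deleting the top row. Summing over all type-$Y$ tableaux gives $\beta\,WYV$ from the $\beta$-corners and $\delta q^{m}\,WYV$ from the $\delta$-corners, whence $\alpha WEYV-\gamma WDYV=(\alpha\beta-\gamma\delta q^{m})WYV=\lambda_{m+1}WYV$.

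The only delicate step is the cancellation of the $\alpha/\gamma$ corners: I must verify scrupulously that replacing the corner $\gamma$ by $\alpha$ leaves every other label and, crucially, every blank-box weight unchanged. This is exactly where the definition of the weight helps, since the relevant rules group $\alpha$ and $\gamma$ together precisely when they are seen to the right; were the corner not a lone $1\times1$ column, a label below it could interfere and the clean cancellation would break. Once this point is checked, the $\beta$- and $\delta$-corner evaluations are routine, because there the top row is completely forced and its boxes are weighted using only what lies to their right.
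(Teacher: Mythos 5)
Your proof is correct and follows essentially the same route as the paper's: both decompose $WEYV$ and $WDYV$ according to the label ($\beta,\gamma$ vs.\ $\alpha,\delta$) of the northeast corner box, cancel the $\gamma$-corner against the $\alpha$-corner contributions via the weight-preserving relabeling (valid precisely because the weight rules treat an $\alpha$ and a $\gamma$ seen to the right identically), and evaluate the $\beta$- and $\delta$-corner tableaux directly, whose forced-empty top rows contribute $u^{|Y|}=1$ and $q^{|Y|}$ respectively. Your write-up in fact makes explicit a couple of points the paper leaves implicit (that the corner is a lone $1\times1$ column, and that the top row factors off from the rest of the tableau), but the argument is the same.
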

\begin{proof}
Using Theorem \ref{comb-interpret}, relation (III) can be
reformulated in terms of staircase tableaux.  
First we rewrite (III) as 
$$\alpha WE YV +  \gamma \delta q^{n-1} WYV = \gamma WD YV  + \alpha \beta WYV,$$
where $n-1=|Y|$.
Since a ``type E" corner box of a staircase tableau
must be either a $\beta$ or $\gamma$, we can rewrite this again as
$$\alpha WE^\beta YV + \alpha WE^\gamma  YV + \gamma \delta 
q^{n-1} WYV = \gamma WD^\alpha YV + \gamma WD^\delta  YV + \alpha \beta WYV.$$
Here $WE^\beta YV$ denotes the generating function for staircase tableaux
of type $EY$, whose northeast corner box is a $\beta$;  
the terms $WE^\gamma  YV$, $WD^\alpha YV$, and $WD^\delta YV$ are defined 
analogously.\footnote{We could 
have defined matrices $D^\alpha, D^\delta, E^\beta, E^\gamma$ so
that they have
this combinatorial interpretation, and then set
$D=D^\alpha + D^\delta$ and $E=E^\beta+E^\gamma$.}

It is now easy to see that 
$\alpha WE^\beta YV = \alpha \beta WYV$ and $\gamma WD^\alpha YV = 
\gamma \delta q^{n-1} WYV$,
since a box labeled $\beta$ must have only empty boxes
(weighted $u=1$) to its left, and a box labeled $\delta$
must have only empty boxes (weighted $q$) to its left.
Also, since the rules for the weight of an empty box which sees
a $\gamma$ to its right are the same as the rules for the weight of 
an empty box which sees an $\alpha$ to its right, we have that 
$\alpha WE^\gamma YV = \gamma W D^\alpha YV.$  This proves relation (III).
\end{proof}

\begin{lemma}\label{decrease}
For any word $Y$ in $D$ and $E$, we have 
$Y_{ijk\ell} = q^{|Y|} Y_{i-1,j-1,k,\ell}$.
\end{lemma}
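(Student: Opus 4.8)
The plan is to prove the identity for $i \ge 1$; note that for $i=0$ the right-hand side vanishes by the convention on negative indices (for instance $D_{0,1,0,0}=\delta \ne q\,D_{-1,0,0,0}=0$), so $i\ge 1$ is the meaningful range. The strategy has two steps: first establish the identity for the single letters $D$ and $E$, then bootstrap to an arbitrary word by multiplicativity. Intuitively, via Theorem \ref{comb-interpret}, carrying one extra $\delta$-indexed row through the whole process of adding $|Y|$ columns forces that row to acquire $|Y|$ new boxes, each lying to the left of its $\delta$ and hence weighted $q$; this accounts for the factor $q^{|Y|}$.

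For the base case I would prove
\[
D_{ijk\ell}=q\,D_{i-1,j-1,k,\ell}, \qquad E_{ijk\ell}=q\,E_{i-1,j-1,k,\ell} \qquad (i\ge 1)
\]
simultaneously by induction on $k$. When $k=0$ every nonzero entry equals one of the explicit values $\delta q^i,\alpha q^i,\beta q^i,\gamma q^i$ (or is $0$), and since the defining conditions ($j<i$, $\ell>k+1$, $i=j-1$, $i=j$) are all invariant under $(i,j)\mapsto(i-1,j-1)$ for $i\ge1$, each such entry visibly scales by $q$ when $i$ and $j$ drop by one. For $k\ge 1$ and entries falling under the recursive (``otherwise") clause, I would substitute the defining recurrence for $D_{ijk\ell}$ (resp.\ $E_{ijk\ell}$) and apply the induction hypothesis at $k-1$ to each of its three terms; the common factor $q$ then pulls out and recombines, by the same recurrence, into $q\,D_{i-1,j-1,k,\ell}$ (resp.\ $q\,E_{i-1,j-1,k,\ell}$). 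Here one checks that the shifted index $(i-1,j-1,k,\ell)$ still falls under the recursive clause, which holds because $k\ge1$ is unchanged and the boundary conditions are preserved.

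For the inductive step on word length, write $Y=ZY'$ with $Z\in\{D,E\}$ and $|Y'|=|Y|-1$, and expand
\[
Y_{ijk\ell}=\sum_{a,b} Z_{iakb}\,Y'_{ajb\ell}.
\]
For $a\ge 1$ I would apply the single-letter identity to $Z$ (legitimate since $i\ge1$) and the induction hypothesis to $Y'$ (legitimate since $a\ge1$), producing a factor $q\cdot q^{|Y|-1}=q^{|Y|}$ in every term and shifting the first index of $Z$ and the second index of $Y'$ down by one. The only remaining term is $a=0$, which I would kill using the monotonicity fact that $X_{ijk\ell}=0$ whenever $j<i$ for every word $X$ (immediate from the first line of the recurrences and preserved under matrix multiplication, or directly from Theorem \ref{comb-interpret}): since $i\ge1$ we have $Z_{i,0,k,b}=0$. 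Reindexing $a\mapsto a-1$ then yields exactly $q^{|Y|}\sum_{a,b}Z_{i-1,a,k,b}\,Y'_{a,j-1,b,\ell}=q^{|Y|}Y_{i-1,j-1,k,\ell}$.

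The main obstacle is not any single computation but the combination of (i) keeping the multi-index bookkeeping straight in the simultaneous $D/E$ induction on $k$, and (ii) handling the $a=0$ boundary term in the multiplicativity step, where a naive application of the single-letter identity would refer to the forbidden index $a-1=-1$; the vanishing $Z_{i,0,k,b}=0$ is precisely what rescues the reindexing. A purely combinatorial proof via the ``extra $\delta$-row" bijection sketched above is also possible, but making that bijection precise (ensuring the carried row stays $\delta$-indexed and that its new boxes are blank and weighted $q$) is more delicate than the algebraic induction, so I would favor the latter.
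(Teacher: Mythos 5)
Your proof is correct, but it takes a genuinely different route from the paper. The paper disposes of this lemma in one paragraph using the combinatorial interpretation of Theorem \ref{comb-interpret}: both $Y_{ijk\ell}$ and $Y_{i-1,j-1,k,\ell}$ enumerate ways of adding $|Y|$ columns so as to change the row-index counts by $j-i$ and $\ell-k$, and the only difference is one extra initial row indexed by $\delta$, which forces $|Y|$ extra blank boxes each weighted $q$ (since blank boxes to the left of a $\delta$ get weight $q$). This is exactly the ``extra $\delta$-row'' argument you sketched as intuition and then set aside as too delicate; the paper treats it as already rigorous, since Lemma \ref{comb-lemma} establishes that $D$ and $E$ are transfer matrices for columns. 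Your algebraic route instead works directly from the defining recurrences: a simultaneous induction on $k$ for the single letters $D,E$, then an induction on word length via matrix multiplication, with the $a=0$ boundary term killed by the vanishing $Z_{i,0,k,b}=0$ for $i\geq 1$. What your version buys: it is self-contained at the level of the matrix definitions (independent of the tableau interpretation), and it makes explicit a caveat the paper leaves silent --- the identity as literally stated fails at $i=0$ (e.g.\ $D_{0,1,0,0}=\delta\neq q\,D_{-1,0,0,0}=0$), which is harmless in the paper's applications only because the lemma is always applied against factors such as $(WX(D+E))_{i-1,k}$ that vanish when $i=0$. The cost is length and index bookkeeping where the paper's argument is a transparent weight-preserving observation.
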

\begin{proof}
We use Theorem \ref{comb-interpret}.  Note that 
both $Y_{ijk\ell}$ and $Y_{i-1,j-1,k,\ell}$ enumerate
the ways of adding $|Y|$ new columns to a staircase tableau
$\T$ so as to 
increase by $j-i$ the number of rows indexed by $\delta$, and to increase by 
$\ell-k$ the number of rows indexed by $\alpha/\gamma$.  The only difference
is the initial number of rows indexed by $\delta$.  Since 
$Y_{ijk\ell}$ has one extra initial row indexed by $\delta$,
this will contribute $|Y|$ extra empty boxes which all get
the weight $q$.  Therefore 
$Y_{ijk\ell} = q^{|Y|} Y_{i-1,j-1,k,\ell}$.
\end{proof}

\begin{proposition}\label{suffices}
To prove (I) and (II), it suffices to prove the following identities
for all non-negative integers $j$ and $\ell$:
\begin{enumerate}
\item $(WXDE)_{j\ell} = q(WXED)_{j\ell} + 
    \alpha \beta (WX(D+E))_{j\ell} - \gamma \delta q^{|X|+1} (WX(D+E))_{j-1,\ell}.$
\item $\beta (WXD)_{j\ell} = \delta (WXE)_{j-1,\ell} +\alpha \beta (WX)_{j,\ell-1}
  -\gamma \delta q^{|X|}(WX)_{j-1,\ell-1}.$
\end{enumerate}
\end{proposition}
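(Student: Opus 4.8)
The plan is to deduce the vector relations (I) and (II) of Theorem \ref{ansatz2} from the entrywise identities (1) and (2) by ``closing up'' the latter against the vector $V$ (and, for (I), against the intervening matrix $Y$). Two elementary facts drive the reduction. First, since $V_{j\ell}=1$ for all $j,\ell$, right-multiplying a row vector $u=(u_{j\ell})$ by $V$ just sums its entries, $uV=\sum_{j,\ell\ge 0}u_{j\ell}$; and by the convention that any entry with a negative subscript vanishes, shifting a summation index such as $j\mapsto j-1$ or $\ell\mapsto\ell-1$ leaves the total unchanged, e.g.\ $\sum_{j,\ell}u_{j-1,\ell}=\sum_{j,\ell}u_{j\ell}=uV$. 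Second, Lemma \ref{decrease} will control how a shift in the $\delta$-index interacts with right multiplication by $Y$, which is what lets the extra power of $q$ in relation (I) appear.

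For (II) there is no intervening $Y$, so the reduction is immediate. Recall $\lambda_{|X|+1}=\alpha\beta-\gamma\delta q^{|X|}$. I would right-multiply identity (2) by $V$, i.e.\ sum both sides over all $j,\ell\ge 0$, and use the index-shift remark to rewrite $\sum_{j,\ell}(WXE)_{j-1,\ell}=(WXE)V$, $\sum_{j,\ell}(WX)_{j,\ell-1}=(WX)V$, and $\sum_{j,\ell}(WX)_{j-1,\ell-1}=(WX)V$. This yields $\beta(WXD)V=\delta(WXE)V+\alpha\beta(WX)V-\gamma\delta q^{|X|}(WX)V$, which rearranges precisely to relation (II).

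For (I), I would first read identity (1) as an equality of row vectors. Writing $R:=WX(D+E)$ and letting $R'$ denote $R$ with its first index shifted down by one (so $R'_{j\ell}=R_{j-1,\ell}$), identity (1) states $WXDE-qWXED=\alpha\beta R-\gamma\delta q^{|X|+1}R'$. Now right-multiply by $Y$ and then by $V$. The unshifted term contributes $\alpha\beta\,WX(D+E)YV$, so it remains to evaluate $R'YV$. Introducing the column vector $YV$ with entries $(YV)_{ik}:=\sum_{j,\ell}Y_{ijk\ell}$, the matrix-multiplication convention gives $R'YV=\sum_{i,k}R_{i-1,k}(YV)_{ik}=\sum_{i,k}R_{ik}(YV)_{i+1,k}$, and Lemma \ref{decrease} gives $(YV)_{i+1,k}=q^{|Y|}(YV)_{ik}$; hence $R'YV=q^{|Y|}R\,YV=q^{|Y|}WX(D+E)YV$. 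Substituting, the right-hand side becomes $(\alpha\beta-\gamma\delta q^{|X|+|Y|+1})WX(D+E)YV$, and since $\lambda_{|X|+|Y|+2}=\alpha\beta-\gamma\delta q^{|X|+|Y|+1}$, the left-hand side $WX(DE-qED)YV$ equals $\lambda_{|X|+|Y|+2}WX(D+E)YV$, which is exactly relation (I).

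The reduction is essentially bookkeeping; the one genuinely non-formal ingredient is the compatibility of the $\delta$-index shift with right multiplication by $Y$, namely the geometric behavior $(YV)_{i+1,k}=q^{|Y|}(YV)_{ik}$ supplied by Lemma \ref{decrease}, together with careful tracking of the powers of $q$ so that they assemble into $\lambda_{|X|+|Y|+2}$. The real difficulty is precisely what this Proposition defers: unlike (I) and (II), the entrywise identities (1) and (2) cannot be collapsed against $V$ and so must be established for every pair $(j,\ell)$ directly from the recursive definitions of $D$ and $E$. That analysis, and not the present reduction, is the hard part of the argument, and it is carried out in the sections that follow.
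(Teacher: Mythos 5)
Your reduction is correct and follows essentially the same route as the paper: sum identity (2) against $V$ to get (II), and for (I) apply identity (1) at the inner indices and use Lemma \ref{decrease} to convert the shift $j\mapsto j-1$ into the factor $q^{|Y|}$, so that $\gamma\delta q^{|X|+1}q^{|Y|}$ assembles into $\lambda_{|X|+|Y|+2}$. The only (cosmetic) difference is that you collapse against $V$ before handling $Y$, whereas the paper first proves the entrywise claim $(WXDEY)_{j\ell}=q(WXEDY)_{j\ell}+\alpha\beta(WX(D+E)Y)_{j\ell}-\gamma\delta q^{|X|+|Y|+1}(WX(D+E)Y)_{j-1,\ell}$ and then sums over $j,\ell$.
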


\begin{proof}
We claim the following: 
if (1) is true, then for any word $Y$ in $D$'s and $E$'s, 
$(WXDEY)_{j\ell}$ is equal to 
$$q(WXEDY)_{j\ell} +
\alpha \beta (WX(D+E)Y)_{j\ell}-\gamma \delta q^{|X|+|Y|+1}
 (WX(D+E)Y)_{j-1,\ell}.$$
To prove the claim, let $Y$ be any word in $D$ and $E$.
Then 
$(WXDEY)_{j\ell}$ is equal to: % &=& \sum_{i,k} (WXDE)_{ik} Y_{ijk\ell}\\
{\small
\begin{eqnarray*}
%(WXDEY)_{j\ell} 
&& \sum_{i,k} (WXDE)_{ik} Y_{ijk\ell}\\
&=&\sum_{i,k} q(WXED)_{ik} Y_{ijk\ell} + 
      \alpha \beta(WX(D+E))_{ik} Y_{ijk\ell}
%    &&
- \gamma \delta q^{|X|+1}\sum_{ik}
      (WX(D+E))_{i-1,k} Y_{ijkl} \\
&=&q (WXEDY)_{j\ell} + \alpha \beta(WX(D+E)Y)_{j\ell} -
      \gamma \delta q^{|X|+|Y|+1} (WX(D+E)Y)_{j-1,\ell}.
\end{eqnarray*}
}
To deduce the final equality above, we applied Lemma \ref{decrease}
to the last term.
%This proves the claim.

Now note that if we take the equation of the claim, and sum over all 
$j$ and $\ell$, then we get precisely (I) (since multiplication
on the right by $V$ has the effect of summing over all indices).
And if we take (2) and sum over all $j$ and $\ell$, we get (II)
This completes the proof.
\end{proof}

\begin{lemma}\label{suffices2}
If the identity (2) of Proposition \ref{suffices} holds for all $j$ and $\ell$,
then the identity (1) of Proposition \ref{suffices}  holds for all $j$ and $\ell$.
\end{lemma}
\begin{proof} To prove the lemma, note
that $(WXDE)_{j\ell} = 
\sum_{i,k} (WXD)_{ik} E_{ijk\ell}$ 
equals:
\begin{equation}
%&&\sum_{i,k} (WXD)_{ik} E_{ijk\ell}\\
\frac{\delta}{\beta} \sum_{i,k} (WXE)_{i-1,k} E_{ijk\ell} +
     \alpha \sum_{i,k} (WX)_{i,k-1} E_{ijk\ell}
     -\frac{\gamma \delta q^{|X|}}{\beta}  \sum_{i,k} (WX)_{i-1,k-1} E_{ijk\ell} \label{10}  
\end{equation}
\begin{eqnarray}
&\hspace{.5cm}=& \frac{q\delta}{\beta} (WXEE)_{j-1,\ell} + \alpha\sum_{i,k}
          (WX)_{i,k-1} (\beta (D+E)_{i,j,k-1,\ell}
            +qE_{i,j,k-1,\ell-1})\label{11} \\
     &&-\frac{\gamma \delta q^{|X|}}{\beta} \sum_{i,k} 
       (WX)_{i-1,k-1} (q\beta (D+E)_{i-1,j-1,k-1,\ell} + 
          q^2 E_{i-1,j-1,k-1,\ell-1}) \nonumber \\
&\hspace{.5cm}=&q\beta^{-1} \delta (WXEE)_{j-1,\ell} + \alpha \beta(WX(D+E))_{j\ell}
 +\alpha q(WXE)_{j,\ell-1} \label{12}\\
&&- \gamma \delta q^{|X|+1}
   (WX(D+E))_{j-1,\ell}
  -\beta^{-1} \gamma \delta q^{|X|+2}(WXE)_{j-1,\ell-1} \nonumber \\
&\hspace{.5cm}=&(q(WXED)_{j\ell}-\alpha q(WXE)_{j,\ell-1}+q\beta^{-1}\gamma \delta q^{|X|+1}
 (WXE)_{j-1,\ell-1}) \label{13} \\
 &&+\alpha \beta(WX(D+E))_{j\ell}
 +\alpha q(WXE)_{j,\ell-1} -\gamma \delta q^{|X|+1}
   (WX(D+E))_{j-1,\ell}\nonumber \\
  &&-\beta^{-1}\gamma \delta q^{|X|+2}(WXE)_{j-1,\ell-1}\nonumber \\
&\hspace{.5cm}=&q(WXED)_{j\ell}+\alpha \beta(WX(D+E))_{j\ell}
  -\gamma \delta q^{|X|+1}(WX(D+E))_{j-1,\ell}. \nonumber
\end{eqnarray}

%In the arguments above, we use $2^{n-1}_{i,m'}$
%for $i \leq j$ and $m'\leq m$ to deduce (\ref{10}); note that as before,
%it was enough to sum over just those $i$ satisfying $i \leq j$.
In the arguments above, to go from (\ref{10}) to (\ref{11}), we used 
Lemma \ref{decrease} to 
replace $E_{ijk\ell}$ in the first term
by $qE_{i-1,j-1,k,\ell}$.  
%To go from (\ref{12}) to (\ref{13}), %the fourth to the fifth
%we use identity $2E^n_{j,m}$.
\end{proof}

By Proposition \ref{suffices} and Lemma \ref{suffices2}, 
to prove Theorem \ref{NewThm}, it 
is enough to prove the following.

\begin{theorem}\label{identities}
For every word $X$ in $D$ and $E$,
and all $j,  \ell \in \Z_{\geq 0}$, 
the identity 
(2) of Proposition \ref{suffices} holds.
Equivalently, 
\begin{equation*}
\beta(XD)_{0,j,0,\ell} - \delta(XE)_{0,j-1,0,\ell} - \alpha \beta (X)_{0,j,0,\ell-1} + q^{|X|} \gamma \delta (X)_{0,j-1,0,\ell-1} = 0.
\end{equation*}
\end{theorem}

To prove Theorem \ref{identities}, we will actually prove 
the following generalization, which reduces to Theorem \ref{identities}
when $k=0$.
\begin{theorem}\label{3param}
For every word $X$ in $D$ and $E$, %and all non-negative
and all $j, k, \ell \in \Z_{\geq 0}$, we have
\begin{align*}
\beta (XD)_{0,j,k,\ell} -  \delta & (XE)_{0,j-1,k,\ell} - \alpha \beta (X)_{0,j,k,\ell-1} + q^{|X|+k} \gamma \delta (X)_{0,j-1,k,\ell-1}\\ &=
(1-q) \sum_{a, b \geq 1} q^{a|X|} \cdot \frac{a}{b} \cdot E_{0,a,k,k-b} \cdot (X)_{0,j-a,k-b,\ell-1} \\
&= (1-q) \sum_{a, b \geq 1} q^{a|X|} \cdot \frac{b-a+1}{b} \cdot D_{0,a,k,k-b} \cdot (X)_{0,j-a,k-b,\ell-1}.
\end{align*}
\end{theorem}

We will prove  Theorem \ref{3param}.
by induction on the length $|X|$ of the word $X$.
Before we begin, we first define a \emph{special  column} in a staircase 
tableau.

\begin{definition}\label{special}
A column in a staircase tableau is \emph{special} if it has
a $\beta$ on the bottom, and the next Greek letter which 
appears above it is a $\delta$.  (In particular, a special column
has at least one $\delta$.)
\end{definition}

We also define the notation
$F_{i,j,k,\ell}=
\begin{cases}
\frac{j-i}{k-\ell} \cdot E_{i,j,k,\ell} &\mbox{if }\ell<k\\
0  &\mbox{otherwise.}
\end{cases}$

It is then easy to prove the following.
\begin{lemma}\label{lem:F}
$F_{i,j,k,\ell}$ is the generating function for the weights
of all possible new special columns
that we could add to the left of a staircase tableau with
$i$ rows indexed by $\delta$ and $k$ rows indexed by $\alpha$ or $\gamma$,
obtaining a new staircase tableau which has $j$ rows indexed by 
$\delta$ and $\ell$ rows indexed by $\alpha$ or $\gamma$.
\end{lemma}

We now turn to the base case of the induction, which is 
Proposition \ref{lem:base} below (when $i=0$).
In the statement of the proposition, 
the notation $\indicator$ is the indicator
function whose value is $1$ if its argument is true, and $0$ otherwise.

\begin{proposition}\label{lem:base}
The following identity holds for all non-negative $i, j, k, \ell$.
\begin{equation}\label{base1}
\beta D_{i,j,k,\ell} - \delta   E_{i,j-1,k,\ell} - q^i \alpha \beta
\indicator_{i=j} \indicator_{\ell=k+1} + q^{i+k} \gamma \delta
\indicator_{j=i+1} \indicator_{\ell=k+1} 
\end{equation}
\begin{align}
&= (1-q) \indicator_{j>i} \indicator_{k-\ell \geq j-i-1} \cdot
\frac{j-i}{k-\ell+1} \cdot E_{i,j,k,\ell-1} \label{base2} \\
&= (1-q) \indicator_{j>i} \indicator_{k-\ell \geq j-i-1} \cdot
\frac{k-\ell-j+i+2}{k-\ell+1} \cdot D_{i,j,k,\ell-1} \label{base3}
\end{align}
\end{proposition}

\begin{proof}
One may prove the proposition by computing
the generating functions for $D_{i,j,k,\ell}$ and 
$E_{i,j,k,\ell}$, using e.g. the techniques from the proof of 
Proposition \ref{prop:maple}.  One may also give a combinatorial
proof, which we will illustrate here.

First note that 
Lemma \ref{lem:base} is obvious when $l=k+1$.  Otherwise, we may 
assume that $\ell \leq k$, in which case
$D_{i,j,k,\ell} = D^{\delta}_{i,j,k,\ell}$ and
$E_{i,j-1,k,\ell} = E^{\beta}_{i,j-1,k,\ell}$.
Therefore in order to show that \eqref{base1} equals \eqref{base2}, we need
to show that 
$\beta D^{\delta}_{i,j,k,\ell} - \delta E^{\beta}_{i,j-1,k,\ell} 
= (1-q) F_{i,j,k,\ell-1}.$  The top and bottom rows of Figure 
\ref{Basefigure} represent the quantities
$\beta D^{\delta}_{i,j,k,\ell} - \delta E^{\beta}_{i,j-1,k,\ell}$ and
$(1-q) F_{i,j,k,\ell-1},$ respectively.
\begin{figure}[h]
\input{Basefigure.pstex_t} 
\caption{}
\label{Basefigure}
\end{figure}

We interpret 
$\beta D^{\delta}_{i,j,k,\ell}$
as the generating function for columns of type
$D^{\delta}_{i,j,k,\ell}$ with an extra $\beta$ added at the bottom.
In other words, these are columns of height
$k+2$, which contain $j-i$ $\delta$'s, 
$(k-\ell)-(j-i)+1$ $\beta$'s, which have a $\beta$
at the very bottom (in the first box) and a $\delta$ just above it
(in the second box). The third box will contain either a Greek 
letter (represented by $G$ in the figure) or will be blank, in which 
case it gets the weight $u$.
Similarly, we interpret $\delta E^{\beta}_{i,j-1,k,\ell}$ as 
the generating function for columns of height $k+2$,
which contain $(j-1)-i+1$ $\delta$'s, 
$(k-l)-(j-i-1)$ $\beta$'s, which have a $\delta$ in the first box,
and a $\beta$ in the second box.  The third box will contain either
a Greek letter, or will be blank, in which case it gets the weight $q$.
As illustrated in Figure \ref{Basefigure}, 
the generating functions for the two sets of columns 
which have a Greek letter as their third box are equal, 
and hence $\beta D^{\delta}_{i,j,k,\ell} - \delta E^{\beta}_{i,j-1,k,\ell}$ 
represents the signed union of the remaining columns whose third box
is blank.

We now consider the quantity 
$F_{i,j,k,\ell-1}-q F_{i,j,k,\ell-1}$. 
$F_{i,j,k,\ell-1}$ is the generating function for columns of
height $k+1$, which contain $j-i$ $\delta$'s, 
$k-\ell+1-(j-i)$ $\beta$'s, which have a $\beta$ in the first box,
and whose next Greek letter above the $\beta$ is a $\delta$.
We insert a new blank box (with weight $u$) above the $\delta$, 
so as to interpret $F_{i,j,k,\ell-1}$ as a generating function for
columns of height $k+2$.
Inserting a new blank box just above the $\beta$ in the bottom box,
we interpret $q F_{i,j,k,\ell-1}$ as the generating function for 
columns of height $k+2$, which contain $j-i$ $\delta$'s, 
$k-\ell+1-(j-i)$ $\beta$'s, which have a $\beta$ in the first 
box,
then at least one blank box (with weight $q$) above that,
and whose next Greek letter above the $\beta$ is a $\delta$.
We partition the set of columns enumerated by $F_{i,j,k,\ell-1}$ 
into two parts, based on whether there is at least one blank box
above the bottom $\beta$ or not.  And we partition the set of columns
enumerated by $qF_{i,j,k,\ell-1}$ into two parts, based on whether 
the $\delta$ has a blank box above it or a Greek letter above it.
As illustrated in the figure, we get a cancellation among two 
of the four parts.  Finally note that the set of columns 
with a $\beta$ in the first box, one or more blank boxes above the 
$\beta$, a $\delta$ above that, and a Greek letter above the 
$\delta$, have a weight-preserving bijection with the set of columns  
with a $\delta$ in the first box, a $\beta$ in the second box,
and a blank box (with weight $q$) in the third box.
(Simply move the $\delta$ from the first set of columns to the bottom.)
Therefore $\beta D^{\delta}_{i,j,k,\ell} - \delta E^{\beta}_{i,j-1,k,\ell}=
(1-q) F_{i,j,k,\ell-1}$.

One may give a similar proof that \eqref{base1} equals \eqref{base3},
or alternatively check directly that \eqref{base2} equals \eqref{base3}.
This completes the proof of the proposition.
\end{proof}

The inductive step of the proof of Theorem \ref{3param} has two 
cases: we need to prove that    
Theorem \ref{3param} holds for words of the form $EY$ and also $DY$,
where $Y$ is a word in 
$D$ and $E$, given that Theorem \ref{3param} holds for the word $Y$.
Our strategy is to explicitly multiply $E$ and $Y$
(respectively $D$ and $Y$),
%(respectively $D$ and $Y$)
expressing $(EY)_{0,j,k,\ell}$ (respectively $(DY)_{0,j,k,\ell}$)
in terms of quantities
such as $(Y)_{0,a,b,c}$.  
The argument is similar for both cases, so for the sake of brevity,
we will include only the proof in the first case.
To multiply $E$ and $Y$, we use the following lemma, which 
follows easily from Lemma \ref{decrease} and
the definition of our matrices.
\begin{lemma}\label{lem:multiply}
For any word $Z$ in $D$ and $E$ and any non-negative
$j, k, \ell$, we have
$(EZ)_{0,j,k,\ell} = q^k \gamma (Z)_{0,j,k+1,\ell} + 
\sum_{r,t \geq 0} q^{r|Z|} E_{0,r,k,k-t} (Z)_{0,j-r,k-t,\ell}.$
\end{lemma}

To treat the inductive step,
we need to show that the quantity 
\begin{align*}
\beta (EXD)_{0,j,k,\ell} -  & \delta  (EXE)_{0,j-1,k,\ell} 
- \alpha \beta (EX)_{0,j,k,\ell-1} + 
q^{|X|+1+k} \gamma \delta (EX)_{0,j-1,k,\ell-1}\\ - &
(1-q) \sum_{a, b \geq 1} q^{a(|X|+1)} \cdot \frac{a}{b} \cdot E_{0,a,k,k-b} 
\cdot (EX)_{0,j-a,k-b,\ell-1}
\end{align*}
equals $0$.
%To prove Theorem \ref{3param}, it suffices to show that 
%the above quantity is zero.  
We start by applying
Lemma \ref{lem:multiply} to the quantities
$(EXD)_{0,j,k,\ell}$, $(EXE)_{0,j-1,k,\ell}$,
$(EX)_{0,j,k,\ell-1}$, $(EX)_{0,j-1,k,\ell-1}$,
and $(EX)_{0,j-a,k-b,\ell-1}$.
We obtain
%{\small
\begin{align*}
 \beta  &\Bigl( q^k \gamma  (XD)_{0,j,k+1,\ell} 
  + \sum_{r,t \geq 0} q^{r(|X|+1)} E_{0,r,k,k-t} (XD)_{0,j-r,k-t,\ell}\Bigr)\\
-  \delta  & \Bigl(q^k \gamma  (XE)_{0,j-1,k+1,\ell} + 
\sum_{r,t \geq 0} q^{r(|X|+1)} E_{0,r,k,k-t} (XE)_{0,j-r-1,k-t,\ell} \Bigr)\\
- \alpha \beta  & \Bigl(q^k  \gamma (X)_{0,j,k+1,\ell-1} + 
 \sum_{r,t \geq 0} q^{r|X|} E_{0,r,k,k-t} (X)_{0,j-r,k-t,\ell-1} \Bigr)\\
+ \gamma \delta &  q^{|X|+1+k}  \Bigl(q^k \gamma (X)_{0,j-1,k+1,\ell-1} + 
\sum_{r,t \geq 0} q^{r|X|} E_{0,r,k,k-t} (X)_{0,j-r-1,k-t,\ell-1} \Bigr)\\
- & (1-q)  \sum_{a,b \geq 1}  q^{a(|X|+1)} \frac{a}{b} E_{0,a,k,k-b} \Bigl(q^{k-b} \gamma (X)_{0,j-a,k+1-b,\ell-1} + \\
  & \hspace{2.2in}  \sum_{r,t \geq 0} q^{r|X|} E_{0,r,k-b,k-t-b} (X)_{0,j-r-a,k-t-b,\ell-1} \Bigr).
\end{align*}
%}

We now apply the inductive hypothesis several times, to rewrite
$\beta(XD)_{0,j,k+1,\ell}$, as well as 
$\beta(XD)_{0,j-r,k-t,\ell}$ for all $r,t$.
We obtain the expression
\begin{align*}
& q^k \gamma  \Bigl( (1-q)  \sum_{a,b \geq 1} q^{a|X|} (X)_{0,j-a,k+1-b,\ell-1} 
\cdot \frac{a}{b} E_{0,a,k+1,k+1-b} \Bigr) \\
+  & \sum_{r,t \geq 0} q^{r(|X|+1)}  E_{0,r,k,k-t} 
 \Bigl( \alpha \beta (X)_{0,j-r,k-t,\ell-1} - q^{|X|+k-t} \gamma \delta
(X)_{0,j-1-r,k-t,\ell-1}+  \\
 & \hspace{1.6in}   (1-q) \sum_{a,b \geq 1} q^{a|X|} (X)_{0,j-a-r,k-t-b,\ell-1} 
\cdot \frac{a}{b} E_{0,a,k-t,k-t-b} \Bigr)\\
- & \alpha \beta \sum_{r,t \geq 0} q^{r|X|}  E_{0,r,k,k-t} 
(X)_{0,j-r,k-t,\ell-1}
+ q^{|X|+1+k} \gamma \delta   \sum_{r,t \geq 0}  q^{r|X|} E_{0,r,k,k-t} 
(X)_{0,j-r-1,k-t,\ell-1} \\
- & (1-q) \sum_{a,b \geq 1} q^{a (|X|+1)} \frac{a}{b}  E_{0,a,k,k-b} 
   \Bigl(q^{k-b} \gamma (X)_{0,j-a,k+1-b,\ell-1} + \\
   & \hspace{2.3in}  \sum_{r,t \geq 0} q^{r|X|} E_{0,r,k-b,k-t-b} (X)_{0,j-r-a,k-t-b,\ell-1}
    \Bigr). 
\end{align*}
The expression above may be viewed as a linear combination of 
terms $(X)_{0,v,w,\ell-1}$, where $v \leq j$, and $w \leq k$.
To prove that the expression is identically $0$, we will show
that the coefficient of each such $(X)_{0,v,w,\ell-1}$ is $0$.

First note that the coefficient of $(X)_{0,j,k-y,\ell-1}$
above (where $y \geq 0$) is 
$E_{0,0,k,k-y} \alpha \beta - \alpha \beta E_{0,0,k,k-y} = 0$.
Therefore it suffices to analyze the coefficient of 
each $(X)_{0,j-1-x,k-y,\ell-1}$ for all $x,y \geq 0$.
This coefficient is:
\begin{align}
&  q^k \gamma (1-q) q^{(x+1)|X|}\cdot  \frac{x+1}{y+1}\cdot E_{0,x+1,k+1,k-y}\label{eq1} \\
+ &  q^{(x+1)(|X|+1)} \alpha \beta E_{0,x+1,k,k-y} \label{eq2} \\
- & q^{(x+1)|X|+x+k-y} \gamma \delta E_{0,x,k,k-y} \label{eq3}\\
+ & \sum_{r,t \geq 0} q^{(x+1)|X|+r} (1-q) E_{0,r,k,k-t} 
\cdot \frac{x+1-r}{y-t} \cdot E_{0,x+1-r,k-t,k-y} \label{eq4}\\
- & \alpha \beta q^{(x+1)|X|} E_{0,x+1,k,k-y}\label{eq5} \\
+ & q^{(x+1)|X|+1+k} \gamma \delta E_{0,x,k,k-y} \label{eq6}\\
- & q^{(x+1)|X|+x+k-y} (1-q) \gamma\cdot \frac{x+1}{y+1}\cdot E_{0,x+1,k,k-y-1}\label{eq7}\\
- & (1-q) \sum_{r,t \geq 0} q^{(x+1)(|X|+1)-r} E_{0,r,k-y+t,k-y} 
\cdot \frac{x+1-r}{y-t} \cdot E_{0,x+1-r,k,k-y+t}.\label{eq8}
\end{align}

Note that every term above contains a factor of 
$q^{(x+1)|X|}$, which we may delete (since our goal is to show
that the sum of the terms is $0$).
If we then combine 
\eqref{eq2} and \eqref{eq5},
\eqref{eq3} and \eqref{eq6},
\eqref{eq1} and \eqref{eq4} (using Lemma \ref{decrease}),
and \eqref{eq7} and \eqref{eq8} (using Lemma \ref{decrease}),
%and divide everything through by (1-q), 
we get
\begin{align}
& (q^{x+1}-1) \alpha \beta E_{0,x+1,k,k-y} \label{eq9} \\
+ & (q^{k+1}-q^{k+x-y}) \gamma \delta E_{0,x,k,k-y} \label{eq10}\\
+ &  (1-q) \sum_{r \geq 0, t \geq -1} E_{0,r,k,k-t}
\cdot\frac{x+1-r}{y-t} \cdot E_{r,x+1,k-t,k-y} \label{eq11}\\
- &  (1-q) \sum_{r \geq 0, t \geq -1} \frac{x+1-r}{y-t} \cdot
 E_{0,x+1-r,k,k-y+t} E_{x+1-r,x+1,k-y+t,k-y}.\label{eq12}
\end{align}

To complete the proof, it suffices to show that the above sum
is $0$.

Using the notation $F_{i,j,k,\ell}$ defined earlier, to show that the sum of 
\eqref{eq9}, \eqref{eq10}, \eqref{eq11}, and \eqref{eq12} vanishes,
one may equivalently show the following identity.

\begin{proposition}\label{prop:maple}
For all non-negative $x$ and $y$, we have that
\small{
\begin{equation*}
(1-q)(EF-FE)_{0,x+1,k,k-y}
= (1-q^{x+1}) \alpha \beta E_{0,x+1,k,k-y} + 
(q^{k+x-y} -q^{k+1}) \gamma \delta E_{0,x,k,k-y}.
\end{equation*}
}
\end{proposition}

\begin{proof}
By Lemma \ref{lem:F}, the equation in 
Proposition \ref{prop:maple} has a combinatorial interpretation 
in terms of columns (and pairs of columns) of staircase tableaux.
We prove the equation by computing the 
generating functions for such columns.

Define the following generating functions:
\begin{align*}
\mathcal{E}(r,z,s) &= \sum_{j, k, \ell} E_{0,j,k,\ell}\ r^j z^k s^{\ell},\\
\mathcal{EF}(r,z,s) &= \sum_{j,k,\ell} (EF)_{0,j,k,\ell} r^j z^k s^{\ell},\\
\mathcal{FE}(r,z,s) &= \sum_{j,k,\ell} (FE)_{0,j,k,\ell} r^j z^k s^{\ell}.
\end{align*}
where the sum is over all non-negative $j, k, \ell$ such that $\ell \leq k$.
Then in order to prove 
Proposition \ref{prop:maple}, we need to verify that 
{\small
\begin{equation*}\label{eq:maple}
(1-q) (\mathcal{EF}(r,z,s) - \mathcal{FE}(r,z,s)) = 
\alpha \beta \mathcal{E}(r,z,s) - \alpha \beta \mathcal{E}(qr,z,s)
+\gamma \delta r \mathcal{E}(qr, z, qs) -\gamma \delta qr
\mathcal{E}(r,qz, s).
\end{equation*}
}

One may compute the three generating functions explicitly
by hand.  To compute $\mathcal{E}(r,z,s)$, note that 
since the sum is over $j, k, \ell$ where $\ell \leq k$,
we need to enumerate columns with a $\beta$ at the bottom.
One may construct such a column from bottom to top:
above the $\beta$, there is a non-negative number of blank boxes
each with weight $q$.  Above these there is an arbitrary 
sequence of $\beta$'s, blank boxes, and $\delta$'s, which we 
partition into blocks consisting of a $\beta$ at the bottom
with some $q$'s above it, and blocks consisting of a 
$\delta$ at the bottom with some $u$'s above it.  Above 
these blocks, there is either an alpha with some $u$'s above it,
or a $\gamma$ with some $q$'s above it, or nothing.  Therefore,
since we set $u=1$, we have
$$\mathcal{E}(r,z,s) = 
\beta  \cdot \frac{1}{1-qzs}\cdot \frac{1}{1-(\frac{\beta z}{1-qzs} + \frac{\delta r z}{1-zs})}\cdot \left(\frac{\alpha z s}{1-zs} + \frac{\gamma zs}{1-qzs} + 1\right).$$

The expressions for the generating functions 
$\mathcal{EF}(r,z,s)$ and $\mathcal{FE}(r,z,s)$ 
are quite complicated, 
so we provide 
a Maple worksheet to compute them and to 
check the identity relating 
$\mathcal{E}(r,z,s)$, 
$\mathcal{EF}(r,z,s)$ and $\mathcal{FE}(r,z,s)$. 
The Maple worksheet may be downloaded at 
\texttt{www.math.berkeley.edu/$\sim$williams/papers/CW-Identity.zip}.
\end{proof}

This completes the proof of Theorem \ref{3param}.

\begin{remark}
It would be interesting to find a combinatorial proof of 
Proposition \ref{prop:maple}.
\end{remark}

\subsection{Applications}\label{Applications}

Once we have a solution to the Matrix Ansatz, it is easy to 
express physical quantities in terms of matrix products
\cite{Derrida1}.  
Set $C=D+E$.

The partition function $Z_n$ is written as 
$ W C^n V,$ and 
the average particle number at site $i$,
$\langle \tau_i \rangle_n$ (where the bracket indicates the average
over the stationary probability distribution) is written as
\begin{eqnarray}
\langle \tau_i\rangle =
\frac{W C^{i-1}DC^{n-i} V}{Z_n}.
\label{def:1Pfn}
\end{eqnarray}
Similarly the two-point function $\langle \tau_i\tau_j\rangle_n$ is given by
\begin{eqnarray}
\langle \tau_i\tau_j\rangle =
\frac{ W C^{i-1}DC^{j-i-1}DC^{n-j} V}{Z_n}, 
\label{def:2Pfn}
\end{eqnarray}
and the $n$-point functions are expressed similarly.
The particle current through the bond between the neighboring sites
from left to right,
which is defined by
$J=\langle \tau_i(1-\tau_{i+1})-q(1-\tau_i)\tau_{i+1}\rangle$,
is simply given by
$J=\frac{Z_{n-1}}{Z_n}.$
This expression is independent of $i$, as expected in 
the steady state.

Note that the matrices $D$ and $E$ that we have defined 
in Section \ref{def-matrices} actually satisfy the Generalized Matrix Ansatz,
not the Matrix Ansatz.  However, we can compare quantities computed
via the two different Ansatzes using Lemma \ref{ansatz-relation}, and 
in particular
equation (\ref{2partition}).
Theorem \ref{Physical} now follows from Theorem \ref{comb-interpret}
and the expressions above for the current and $m$-point
functions in terms of matrix products.

\section{The proof of our Askey-Wilson moment formula}\label{AWproof}

Before proving Theorem \ref{moments}, we need to prove the following
result.

\begin{lemma}\label{ansatz-relation}
Let $D, E, W, V$ be a solution to the Ansatz of Theorem \ref{ansatz2},
and let $\tilde{D}, \tilde{E}, \tilde{W}, \tilde{V}$ be a solution
to the Ansatz of Theorem \ref{ansatz}.   Let $h$ denote the ratio
$\frac{\tilde{W} \tilde{V}}{WV}$. Then if $X$ is a word in 
$D$ and $E$, and $\tilde{X}$ is the corresponding word in 
$\tilde{D}$ and $\tilde{E}$, then 
$$WXV = h^{-1} \tilde{W} \tilde{X} \tilde{V}  \prod_{i=0}^{|X|-1} \lambda_i.$$ 

In particular, if 
$\tilde{Z}_n = \tilde{W} (\tilde{D}+\tilde{E})^n \tilde{V}$,
then  
$Z_n = h^{-1} \tilde{Z}_n \prod_{i=0}^{n-1} \lambda_i.$
\end{lemma}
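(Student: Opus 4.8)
The plan is to establish the word-by-word identity $WXV=h^{-1}\tilde W\tilde X\tilde V\prod_{i=1}^{|X|}\lambda_i$ for every word $X$ in $D$ and $E$, and then to deduce the ``In particular'' statement by summing over all words of a fixed length. The idea is to rescale $WXV$ so that it obeys exactly the three reduction relations of the ordinary Matrix Ansatz of Theorem~\ref{ansatz}, which is what the relations of the generalized Ansatz become once the length-dependent constants $\lambda_n$ are stripped out. Accordingly, set $\Lambda_0=1$ and $\Lambda_m=\lambda_m\Lambda_{m-1}$, so that $\Lambda_m=\prod_{i=1}^{m}\lambda_i$, and define
\[
G(X):=\frac{h\,WXV}{\Lambda_{|X|}},\qquad \tilde g(X):=\tilde W\tilde X\tilde V.
\]
Since rescaling $W$ or $V$ (resp. $\tilde W,\tilde V$) leaves the relations (I), (II), (III) and the DEHP relations intact, there is no need to assume $WV=1$; and on the empty word $G(\emptyset)=h\,WV=\tilde W\tilde V=\tilde g(\emptyset)$ by the definition of $h$. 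It therefore suffices to prove $G(X)=\tilde g(X)$ for all $X$.

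First I would verify that $G$ satisfies the three DEHP rewrite rules. For a word $X'DEY'$ of total length $n$, relation (I) reads $WX'DEY'V=q\,WX'EDY'V+\lambda_n(WX'DY'V+WX'EY'V)$; multiplying through by $h/\Lambda_n$ and using $\Lambda_n/\Lambda_{n-1}=\lambda_n$ converts the coefficient $\lambda_n$ on the two length-$(n-1)$ terms into $1$, giving
\[
G(X'DEY')=q\,G(X'EDY')+G(X'DY')+G(X'EY'),
\]
which is exactly the rule induced by $\tilde D\tilde E-q\tilde E\tilde D=\tilde D+\tilde E$. In the same way, relation (II) applied to a trailing $D$ yields $\beta\,G(XD)-\delta\,G(XE)=G(X)$, the rule induced by $\beta\tilde D\tilde V-\delta\tilde E\tilde V=\tilde V$, and relation (III) applied to a leading $E$ yields $\alpha\,G(EY)-\gamma\,G(DY)=G(Y)$, the rule induced by $\alpha\tilde W\tilde E-\gamma\tilde W\tilde D=\tilde W$; in both cases the telescoping ratio $\Lambda_{m+1}/\Lambda_m=\lambda_{m+1}$ exactly absorbs the length-dependent constant. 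This cancellation is the entire purpose of the rescaling.

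Next I would argue that these three rules pin down the value of any word as a fixed scalar times the empty-word value. Repeated use of the first rule normal-orders a word into a combination of words $E^aD^b$; this terminates because each rewrite of a factor $DE$ either shortens the word or, in the $q\,ED$ term, strictly lowers the number of inverted pairs at fixed length. Applying the second rule to trailing $D$'s and the third to leading $E$'s (re-normal-ordering after each step) then strips the boundary letters, reducing everything to a scalar multiple of the empty word; termination here follows from a lexicographic induction on length and on the numbers of trailing $D$'s and leading $E$'s. Because $G$ and $\tilde g$ obey identical rules, carrying out one such fixed reduction on both writes each as the \emph{same} scalar times its empty-word value; since those values agree, $G(X)=\tilde g(X)$, which is the main identity. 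I expect this termination bookkeeping to be the only real obstacle, as the algebraic cancellations of the previous step are forced once the rescaling is chosen.

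Finally, the ``In particular'' statement follows immediately. By Theorem~\ref{comb-interpret} we have $Z_n=WC^nV$ with $C=D+E$, and $C^n=\sum_{|X|=n}X$ is the sum over all $2^n$ words of length $n$; likewise $\tilde Z_n=\tilde W(\tilde D+\tilde E)^n\tilde V=\sum_{|X|=n}\tilde W\tilde X\tilde V$. Applying the main identity termwise and factoring out the common product $\prod_{i=1}^{n}\lambda_i$, which depends only on $n$, gives
\[
Z_n=\sum_{|X|=n}WXV=h^{-1}\Bigl(\prod_{i=1}^{n}\lambda_i\Bigr)\sum_{|X|=n}\tilde W\tilde X\tilde V=h^{-1}\tilde Z_n\prod_{i=1}^{n}\lambda_i .
\]
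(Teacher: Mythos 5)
Your first two steps are correct and rather elegant: rescaling by $G(X)=h\,WXV/\Lambda_{|X|}$ really does convert relations (I)--(III) of Theorem \ref{ansatz2} into exactly the three DEHP rules satisfied by $\tilde g(X):=\tilde W\tilde X\tilde V$, with $G(\emptyset)=\tilde g(\emptyset)$. (Your product $\prod_{i=1}^{|X|}\lambda_i$, rather than the stated $\prod_{i=0}^{|X|-1}\lambda_i$, is in fact the indexing consistent with the constant $\lambda_{|Y|+1}$ in relation (III) and with equation (\ref{2partition}), so that discrepancy is an off-by-one in the paper, not an error of yours.) The genuine gap is in your third step, the claim that the three rules pin down every $G(X)$ from $G(\emptyset)$ by a \emph{terminating} reduction. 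The boundary rules do not strip letters, they trade them: rule (II) turns a trailing $D$ into a trailing $E$ plus a shorter word, and rule (III) turns a leading $E$ into a leading $D$ plus a shorter word. Run your own procedure on $X=E^n$: rule (III) gives $G(E^n)=\frac{\gamma}{\alpha}G(DE^{n-1})+\frac{1}{\alpha}G(E^{n-1})$; normal-ordering gives $G(DE^{n-1})=q^{n-1}G(E^{n-1}D)+(\text{shorter words})$; rule (II) gives $G(E^{n-1}D)=\frac{\delta}{\beta}G(E^n)+\frac{1}{\beta}G(E^{n-1})$. Modulo shorter words you are back where you started, $G(E^n)=\frac{\gamma\delta q^{n-1}}{\alpha\beta}\,G(E^n)+(\text{shorter words})$: the pair (number of leading $E$'s, number of trailing $D$'s) cycles $(n,0)\to(n-1,1)\to(n,0)$, so no lexicographic measure of the kind you invoke decreases, and the rewriting does not terminate.

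The only way to break the cycle is to solve the linear equation, i.e.\ divide by $1-\frac{\gamma\delta q^{n-1}}{\alpha\beta}=\frac{\lambda_n}{\alpha\beta}$, and this is exactly the danger point: $\lambda_n=\alpha\beta-\gamma\delta q^{n-1}$ can vanish for legitimate ASEP parameters --- the degenerate case that motivates this whole paper --- and when $\lambda_n=0$ the linear system given by the three rules genuinely fails to determine length-$n$ values from shorter ones (the homogeneous system has nonzero solutions), so ``same rules and same value at $\emptyset$'' does not imply $G=\tilde g$. Note also that your very definition of $G$ divides by $\Lambda_{|X|}$, which vanishes as soon as any $\lambda_i=0$. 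To repair the argument you must show that the existence of the DEHP solution $\tilde D,\tilde E,\tilde W,\tilde V$ forces $\lambda_n\neq 0$ for all $n$; that is the Essler--Rittenberg result \cite{ER} quoted in the introduction, an external input your rewriting does not supply. The paper's proof sidesteps all of this by a different mechanism, with no divisions anywhere: by Theorems \ref{ansatz} and \ref{ansatz2}, both $WXV$ and $\tilde W\tilde X\tilde V$ are unnormalized stationary probabilities of the same finite Markov chain, so uniqueness of the stationary distribution gives $WXV=c_n\,\tilde W\tilde X\tilde V$ with $c_n$ depending only on $n=|X|$; a single application of relation (III), compared against $\alpha\tilde W\tilde E-\gamma\tilde W\tilde D=\tilde W$, then yields the multiplicative recursion $c_{n+1}=\lambda_{n+1}c_n$ with $c_0=h^{-1}$. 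If you want to keep your rescaling framework, the uniqueness input has to come from the Markov chain (or from \cite{ER}), not from a termination argument.
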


\begin{proof}
Let $\tau$ denote the type of $X$, and let $n=|X|$.  We use induction on $n$.
By Theorem \ref{ansatz2} and Theorem \ref{ansatz} respectively,
$WXV$ and $\tilde{W} \tilde{X} \tilde{V}$ compute
(unnormalized) steady state probabilities of being in state $\tau$.
Therefore $WXV = c_n \tilde{W} \tilde{X} \tilde{V}$ for some constant $c_n$
that depends  on $n$ but not $X$.   We want to show that 
$c_n = h^{-1} \prod_{i=0}^{n-1} \lambda_i$.

Since  we have assumed that $D, E, W, V$ satisfy the relations of Theorem \ref{ansatz2},
%\begin{align*}
$\gamma WDXV - \alpha WEXV = \lambda_{n} WXV$.  By induction, we conclude that
$$\gamma WDXV - \alpha WEXV = 
  \lambda_{n} \tilde{W} \tilde{X} \tilde{V} h^{-1} \prod_{i=0}^{n-1} \lambda_i.$$
%\end{align*}
But also 
\begin{align*}
\gamma WDXV - \alpha WEXV &= c_{n+1} \gamma \tilde{W}\tilde{D}\tilde{X}\tilde{V} - c_{n+1} \alpha \tilde{W} \tilde{E} \tilde{X} \tilde{V} \\
   &= c_{n+1} (\gamma \tilde{W}\tilde{D}\tilde{X} \tilde{V} - 
               \alpha \tilde{W} \tilde{E} \tilde{X} \tilde{V}) \\
   &= c_{n+1} \tilde{W} \tilde{X} \tilde{V},
\end{align*}
by Theorem \ref{ansatz}.
This shows
that $c_{n+1} = h^{-1} \prod_{i=0}^{n} \lambda_i,$ 
which completes the proof.
\end{proof}

We now prove Theorem \ref{moments}, using some results of 
\cite{USW}.

\begin{proof}[Proof of Theorem \ref{moments}]
Let $\tilde{Z}_n$ denote the partition function from \cite{USW},
i.e. $\tilde{Z}_n = \tilde{W} (\tilde{D}+\tilde{E})^n \tilde{V}$,
where $\tilde{D}, \tilde{E}, \tilde{W}, \tilde{V}$ are a solution
to the Ansatz of 
Theorem \ref{ansatz}, and $\tilde{W} \tilde{V}=h_0$ 
(see \cite[(4.18)]{USW}).  Here $h_0$ is as in Section 
\ref{AW Section}.
Then by \cite[Section 6.1]{USW},
\begin{equation*}
\tilde{Z}_n = \oint_C\frac{dz}{4\pi iz}
w\left((z+z^{-1})/2\right)
\left[ \frac{z+z^{-1}+2}{1-q}\right] ^n.
\end{equation*}
Therefore
\begin{equation*}
\tilde{Z}_n = \oint_C\frac{dz}{4\pi iz}
w\left((z+z^{-1})/2\right)\left(\frac{2}{1-q}\right)^n
\left[ \frac{z+z^{-1}}{2} + 1\right] ^n,
\end{equation*}
which implies that 
\begin{align*}
\left(\frac{1-q}{2}\right)^n \tilde{Z}_n &= \oint_C\frac{dz}{4\pi iz}
w\left((z+z^{-1})/2\right)
\left[ \frac{z+z^{-1}}{2} + 1\right] ^n\\
 &= \sum_{k=0}^n {n \choose k} 
 \oint_C\frac{dz}{4\pi iz}
w\left((z+z^{-1})/2\right)
\left[ \frac{z+z^{-1}}{2}\right] ^k\\
&= \sum_{k=0}^n {n \choose k} \mu_k.
\end{align*}

Inverting this, we get 
\begin{equation*}
\mu_k = \sum_{n=0}^k (-1)^{k-n} {k\choose n}
   \left(\frac{1-q}{2}\right)^{n} \tilde{Z}_{n}.
\end{equation*}

By Theorem \ref{NewThm}, we know that 
$Z_{n}$ is the generating function for all staircase
tableaux of size $n$.  
By Lemma \ref{ansatz-relation},
\begin{equation} \label{2partition}
Z_n = h_0^{-1} \tilde{Z}_n \prod_{i=0}^{n-1} (\alpha \beta - \gamma \delta q^i).
\end{equation}
Therefore $$\mu_k = h_0 \sum_{n=0}^k (-1)^{k-n}{k \choose n} 
\left(\frac{1-q}{2}\right)^{n}
 \frac{{Z}_{n}}{\prod_{i=0}^{n-1} (\alpha \beta - \gamma \delta q^i)}.$$
\end{proof}

\section{Open problems}\label{conc}

\subsection{Symmetries in the ASEP}
Recall that the ASEP has 
``left-right," ``arrow-reversal,"
and ``particle-hole" symmetries, which
imply 
Observation \ref{symmetries}.

\begin{problem}\label{sym}
For each symmetry above, prove the corresponding identity 
in Observation \ref{symmetries} by describing an appropriate involution
on staircase tableaux.
\end{problem}

Proving the second identity  
in this manner is easy.
Namely, define
a map $\iota$  by
letting $\iota(\T)$ be the tableau obtained from $\T$
by switching $\beta$'s and $\delta$'s,
and switching $\alpha$'a and $\gamma$'s; clearly
if $\wt(\T) = \alpha^{i_1} \beta^{i_2} \gamma^{i_3} \delta^{i_4} q^{i_5} u^{i_6},$ 
then $\wt(\iota(\T)) = 
\alpha^{i_3} \beta^{i_4} \gamma^{i_1} \delta^{i_2} q^{i_6} u^{i_5}.$  This 
plus Theorem \ref{NewThm}
proves the second identity. 
It remains to find an involution $\iota'$ proving the first
identity
(the remaining involution can  be 
constructed by composing $\iota'$ with $\iota$).
A natural guess is to define
$\iota'(\T)$ by transposing $\T$ then switching
$\alpha$'s and $\delta$'s, and $\beta$'s and $\gamma$'s.  This works
when $q=u$, but not for $q \neq u$.

\subsection{Lifting the ASEP to a Markov chain on staircase tableaux}
\begin{problem}
Define a Markov chain on the set of all staircase tableaux of size $n$
which \emph{projects} to the ASEP
in the sense of \cite{CW2},  
such that the steady state probability of a tableau $\T$ is 
proportional to $\wt(T)$.
Such an approach would give a completely
combinatorial proof of Theorem \ref{NewThm}.
(This was done in \cite{CW2} for $\gamma=\delta=0$.)
\end{problem}

\subsection{``Birth certificates" for particles}

In the ASEP, a black particle enters from either the 
left (at rate $\alpha$) or from the right (at rate $\delta$).
Similarly, a ``hole" (or a white particle) enters from either the 
left (at rate $\gamma$) or from the right (at rate $\beta$).  One could 
imagine defining a more refined ASEP, in which each particle in the lattice
has attached to it its ``birth certificate," that is, the information
of whether it entered the lattice from the left or from the right.
Such an ASEP would be a Markov chain on $4^n$ states (all words of length 
$n$ in $\alpha, \beta, \gamma$ and $\delta$), 
which projects to the ASEP upon mapping the letters $\alpha$ and $\delta$ 
to a black particle, and the letters $\beta$ and $\gamma$ to a white particle.
One could then hope to prove an analogue of
Theorem \ref{NewThm} as follows:

\begin{problem}
Fix a lattice of $n$ sites, and 
let $S$ be the set
of all $4^n$ 
words of length $n$ on the alphabet 
$\{\alpha, \beta, \gamma, \delta\}$, which we think of as 
configurations of four kinds of particles -- two kinds 
of black particles, 
labeled $\alpha$ and $\delta$, and two kinds of white particles,
labeled $\gamma$
and $\beta$.  Define a Markov chain on $S$ 
with the following properties:
\begin{itemize}
\item particles labeled $\alpha$ and $\gamma$ always enter the lattice
  from the left, and particles labeled $\beta$ and $\delta$ always enter from the right;
\item the Markov chain projects to the ASEP;
\item the steady state probability of state $(\tau_1,\dots,\tau_n)$
is proportional to the generating function for all 
staircase tableaux whose border is $(\tau_1,\dots,\tau_n)$.
\end{itemize}
\end{problem}

\subsection{A combinatorial proof of the relations of the Ansatz}

In Section \ref{proof1}, we gave a combinatorial proof that
$D,E,V,W$ satisfy  
relation (III) of 
Theorem \ref{ansatz2}, by translating it into a statement about
tableaux.  However, we have not yet found a combinatorial proof 
that $D,E,V,W$ satisfy (I) and (II).

\begin{problem}
Give a combinatorial proof of relations (I) and (II)
of Theorem 5.2.  
\end{problem}

We note that when $q=u$, or one of 
$\alpha, \beta, \gamma, \delta$ is $0$, the above problem is easy.

\subsection{Specializing our moment formula for Askey-Wilson polynomials}

\begin{problem}
Show directly that our moment formula  recovers
already-known moment formulas for specializations or limiting cases of 
Askey-Wilson polynomials.
\end{problem}

\section{Appendix: Staircase, permutation, and alternative tableaux}

\begin{definition}\cite{SW, Postnikov}
A \emph{permutation tableau}  $\T$ is a Young diagram 
(where rows may have length $0$) whose
boxes are filled with $0$'s and $1$'s,
such that
each column contains at least one $1$, and there is no $0$
which has simultaneaously a $1$ above it in the same column
and a $1$ to its left in the same row. The \emph{length} of $\T$
is the sum of its number of rows and columns.
\end{definition}

\begin{definition}\cite{Viennot}
An \emph{alternative tableau} $\T$ is a Young diagram 
(where rows \emph{and} columns may have length $0$) whose
boxes are either empty or filled with
left arrows $\leftarrow$ or up arrows $\uparrow$,
such that all boxes to the left of a $\leftarrow$ are empty
and all boxes above an $\uparrow$ are empty.\footnote{Actually
the alternative tableaux of \cite{Viennot} were defined 
as Young diagrams with blue, red
and empty boxes; we define them here using left and up arrows,
instead of blue and red boxes, following 
\cite{nadeau}.}
The \emph{length} of $\T$
is the sum of its number of rows and columns.
\end{definition}

See  \cite{nadeau} for more information about alternative tableaux.

\begin{proposition}
There is a bijection between staircase tableaux of size
$n$ which do not contain any $\gamma$ or $\delta$, and:
\begin{enumerate}
\item permutation tableaux of length $n+1$;
\item alternative tableaux of length $n$.
\end{enumerate}
\end{proposition}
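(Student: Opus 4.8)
The plan is to prove statement (2) directly, constructing an explicit bijection $\Phi$ between staircase tableaux of size $n$ with no $\gamma$ or $\delta$ and alternative tableaux of length $n$, and then to obtain (1) by composing $\Phi$ with the known bijection between alternative tableaux of length $n$ and permutation tableaux of length $n+1$ (as in \cite{Viennot, nadeau}; see also the complementary statistics of \cite{Burstein, CN}).

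First I would record the key structural observation about a staircase tableau $\T$ with no $\gamma$ or $\delta$. Every diagonal box of such a $\T$ is labeled $\alpha$ or $\beta$, and every non-diagonal box is labeled $\alpha$, $\beta$, or is empty. Writing $\tau_i$ for the label of the diagonal box in row $i$, I claim that an interior box $(i,j)$ (with $i+j\le n$) can be nonempty only when $\tau_i=\alpha$ and the diagonal box of its column (which sits in row $n+1-j$) is a $\beta$. Indeed, if that column-diagonal box were an $\alpha$, then $(i,j)$ lies above an $\alpha$ and must be empty; and if $\tau_i=\beta$, then $(i,j)$ lies to the left of a $\beta$ and must be empty. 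Conversely such \emph{surviving} boxes --- those whose row-diagonal is $\alpha$ and whose column-diagonal is $\beta$ --- are exactly the ones allowed to carry a label. Hence $\T$ is completely determined by its type $\tau$ together with the labels of the surviving boxes, since all other interior boxes are forced to be empty.

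Next I would define $\Phi(\T)$. Reading the surviving boxes with the $\alpha$-rows ordered top to bottom and the $\beta$-columns ordered suitably left to right, they form a Young diagram $\lambda$: in an $\alpha$-row $i$ the surviving boxes correspond to the $\beta$-diagonal positions $p>i$, and this count is weakly decreasing as $i$ increases, which is exactly the partition condition for $\lambda$. The rows of $\lambda$ are indexed by the $\alpha$'s of the diagonal and its columns by the $\beta$'s, so the boundary path of $\lambda$ reads off $\tau$ and the number of rows plus the number of columns of $\lambda$ equals $(\#\alpha)+(\#\beta)=n$. I then set $\Phi(\T)$ to be the filling of $\lambda$ obtained by replacing each $\alpha$ by an up arrow $\uparrow$, each $\beta$ by a left arrow $\leftarrow$, and leaving empty boxes empty. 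The staircase conditions ``all boxes left of a $\beta$ are empty'' and ``all boxes above an $\alpha$ are empty'' restrict on $\lambda$ to precisely the alternative-tableau conditions, so $\Phi(\T)$ is an alternative tableau of length $n$. The inverse reads the boundary word of a given alternative tableau as the type $\tau$, places the corresponding $\alpha$'s and $\beta$'s along the diagonal of the staircase, fills the surviving boxes via $\uparrow\mapsto\alpha$, $\leftarrow\mapsto\beta$, empty $\mapsto$ empty, and declares every other interior box empty; the structural observation guarantees this is the unique valid staircase tableau with the prescribed data.

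The routine part is checking that $\Phi$ and its inverse are well defined and mutually inverse. The one point demanding care --- and the main obstacle --- is the bookkeeping that identifies the surviving region with a genuine Young diagram and matches the type word $\tau$ of $\T$ with the boundary lattice path of $\lambda$, including the correct left-to-right ordering of the $\beta$-columns so that each row of $\lambda$ is left-justified. Once (2) is established, statement (1) follows at once by composing $\Phi$ with the standard length-shifting bijection between alternative tableaux of length $n$ and permutation tableaux of length $n+1$; alternatively one may describe this composite directly, using the distinguished top row of the permutation tableau to absorb the extra unit of length.
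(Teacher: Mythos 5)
Your proposal is correct and follows essentially the same route as the paper: your ``surviving boxes'' (row-diagonal $\alpha$, column-diagonal $\beta$) are exactly what remains after the paper's operation of deleting the column above each diagonal $\alpha$ and the row left of each diagonal $\beta$, and both maps then relabel $\alpha\mapsto\uparrow$, $\beta\mapsto\leftarrow$ to land on alternative tableaux. The only cosmetic difference is that for part (1) you cite the standard permutation-tableau/alternative-tableau correspondence (via topmost $1$'s and rightmost restricted $0$'s), while the paper writes that bijection out explicitly.
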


\begin{proof}
We first give a bijection from permutation tableaux to staircase tableaux.
Define a {\it restricted $0$} of a permutation tableau to be 
a $0$ which has a $1$ above it in the same column.  A restricted $0$
is {\it rightmost} if it is the rightmost restricted $0$ in its row.
If $\T$ is a permutation tableau, we replace with a $\leftarrow$
every rightmost restricted
$0$, and replace with a $\uparrow$ 
every $1$ which is the highest $1$
in its column but is not in the top row.  We replace every other
entry of $\T$ by an empty box, and delete the top row
(but we remember the length of the top row by possibly
inserting empty columns to the right).  The result
is an alternative tableau, 
see  Figure \ref{ST-PT}, and the map can be easily inverted.\\

For the second bijection, fix a staircase tableau of size $n$.
For $i$ from $1$ to $n$, if the $i$th diagonal box contains an $\alpha$,
then delete this entry and the column above it
(this $\alpha$ will correspond to a vertical step in the 
south-east border of the resulting alternative tableau).  Otherwise
if the $i$th diagonal box contains a $\beta$, delete this entry and 
the row to its left (this $\beta$ will correspond to a horizontal step
in the south-east border of the resulting tableau).
Then replace each $\alpha$ with an $\uparrow$ and each $\beta$ with
a $\leftarrow$, and discard all the other entries.
See Figure \ref{ST-PT}.
\end{proof}

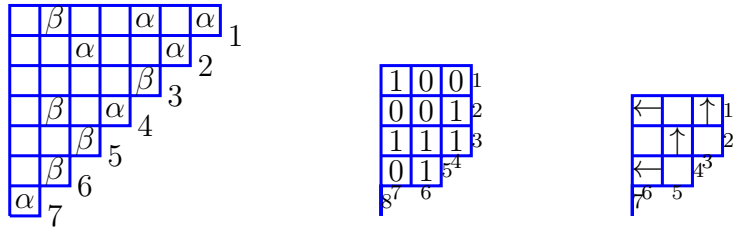
\begin{figure}[htp]
\centering
\psset{unit=0.4cm}\psset{linewidth=0.4mm}
\begin{pspicture}(0,0)(7,7)
\psline(0,0)(1,0)(1,1)(2,1)(2,2)(3,2)(3,3)(4,3)(4,4)
(5,4)(5,5)(6,5)(6,6)(7,6)(7,7)(0,7)(0,0)
\psline(1,1)(1,7)
\psline(2,2)(2,7)
\psline(3,3)(3,7)
\psline(4,4)(4,7)
\psline(5,5)(5,7)
\psline(6,6)(6,7)
\psline(1,1)(0,1)
\psline(2,2)(0,2)
\psline(3,3)(0,3)
\psline(4,4)(0,4)
\psline(5,5)(0,5)
\psline(6,6)(0,6)
\rput(0.5,0.5){$\alpha$}
\rput(1.5,1.5){$\beta$}
\rput(2.5,2.5){$\beta$}
\rput(3.5,3.5){$\alpha$}
\rput(4.5,4.5){$\beta$}
\rput(5.5,5.5){$\alpha$}
\rput(6.5,6.5){$\alpha$}
\rput(1.5,3.5){$\beta$}
\rput(2.5,5.5){$\alpha$}
\rput(4.5,6.5){$\alpha$}
\rput(1.5,6.5){$\beta$}
\rput(1.5,0){7}
\rput(2.5,1){6}
\rput(3.5,2){5}
\rput(4.5,3){4}
\rput(5.5,4){3}
\rput(6.5,5){2}
\rput(7.5,6){1}
\end{pspicture}\hspace{2cm}
\begin{pspicture}(0,0)(3,5)
%\psset{unit=0.5cm}\psset{linewidth=0.4mm}
\psline(0,0)(0,1)(2,1)(2,2)(3,2)(3,5)(0,5)(0,0)
\psline(1,1)(1,5)
\psline(2,2)(2,5)
\psline(0,2)(2,2)
\psline(0,3)(3,3)
\psline(0,4)(3,4)
\rput(0.5,1.5){$0$}
\rput(1.5,1.5){$1$}
\rput(0.5,2.5){$1$}
\rput(1.5,2.5){$1$}
\rput(2.5,2.5){$1$}
\rput(0.5,3.5){$0$}
\rput(1.5,3.5){$0$}
\rput(2.5,3.5){$1$}
\rput(0.5,4.5){$1$}
\rput(1.5,4.5){$0$}
\rput(2.5,4.5){$0$}
\rput(3.2,4.5){\tiny 1}
\rput(3.2,3.5){\tiny 2}
\rput(3.2,2.5){\tiny 3}
\rput(2.5,1.8){\tiny 4}
\rput(2.2,1.5){\tiny 5}
\rput(1.5,0.8){\tiny 6}
\rput(0.5,0.8){\tiny 7}
\rput(0.2,0.5){\tiny 8}
\end{pspicture}\hspace{2cm}
\begin{pspicture}(0,0)(3,5)
%\psset{unit=0.5cm}\psset{linewidth=0.4mm}
\psline(0,0)(0,1)(2,1)(2,2)(3,2)(3,4)(0,4)(0,0)
\psline(1,1)(1,4)
\psline(2,2)(2,4)
\psline(0,2)(2,2)
\psline(0,3)(3,3)
\rput(0.5,1.5){$\leftarrow$}
%\rput(1.5,1.5){$1$}
\rput(0.5,3.5){$\leftarrow$}
%\rput(1.5,2.5){$1$}
%\rput(2.5,2.5){$0$}
%\rput(0.5,3.5){$0$}
\rput(1.5,2.5){$\uparrow$}
\rput(2.5,3.5){$\uparrow$}
\rput(3.2,3.5){\tiny 1}
\rput(3.2,2.5){\tiny 2}
\rput(2.5,1.8){\tiny 3}
\rput(2.2,1.5){\tiny 4}
\rput(1.5,0.8){\tiny 5}
\rput(0.5,0.8){\tiny 6}
\rput(0.2,0.5){\tiny 7}
\end{pspicture}
\caption{From a staircase tableau, to a permutation tableau and an alternative tableau}
\label{ST-PT}
\end{figure}

\end{document}